\theoremstyle{definition}
\newtheorem{Def}{Definition}[section]
\newtheorem{Rem}[Def]{Remark}
\newtheorem{Notation}[Def]{Notation}
\theoremstyle{plain}
\newtheorem{Thm}[Def]{Theorem}
\newtheorem{Prop}[Def]{Proposition}
\newtheorem{Lem}[Def]{Lemma}
\newtheorem{Cor}[Def]{Corollary}
\newtheorem{Fact}[Def]{Fact}
\newcommand{\an}[2][n]{\left(#2\mathrel;#1\right)}
\newcommand{\pd}[1]{\dfrac{\partial}{\partial #1}}
\newcommand{\genpdd}[2]{\dfrac{\partial^2}{\partial{#1}\partial{#2}}}
\newcommand{\pdd}[1]{\dfrac{\partial^2}{\partial{#1}^2}}
\newcommand{\fps}{[\![z_1,\ldots,z_{2g-1}]\!]}
\newcommand{\diff}[1]{\mathop{\mathcal{D}_{#1}}}
\newcommand{\diffchar}[2]{\mathop{\mathcal{D}_{#1}^{(#2)}}}
\newcommand{\diffsimp}[2]{\mathop{\mathcal{D}_{#1,\,#2}}}
\newcommand{\ALseries}{\mathcal{F}(a,b_1,\ldots,b_{2g-1},c\,;z_1,\ldots,z_{2g-1})}
\newcommand{\subALseriesdef}{%
\frac{\an[\displaystyle\sum_{k=1}^{2g-1} n_k]{a} \displaystyle\prod_{k=1}^{2g-1} \an[n_k]{b_k}}%
{\an[\displaystyle\sum_{k=1}^{2g-1} n_k]{c} \displaystyle\prod_{k=1}^{2g-1} \an[n_k]{1}}%
}
\newcommand{\ALseriesdef}{%
\sum_{(n_1,\ldots,n_{2g-1})\in \mathbb{N}_0^{2g-1}}\subALseriesdef%
}
\newcommand{\zzz}{z^{n_1}_1\cdots z^{n_{2g-1}}_{2g-1}}
\newcommand{\setsymbol}[2]{\left\{ #1 \mathrel{} \middle| \mathrel{} #2 \right\}}
\newcommand{\setsymbolin}[3]{\left\{ #1 \in #2 \mathrel{} \middle| \mathrel{} #3 \right\}}
\newcommand{\quotientset}[2][\sim]{\left.#2 \middle/ \mathord#1 \right.}
\newcommand{\nnn}{n_1,\ldots,n_{2g-1}}
\NewDocumentCommand{\truncatedAL}{o o}{%
    \IfValueTF{#1}{%
        \IfValueT{#2}{%
            \widetilde{\mathcal{F}}_{#1,#2}
        }
    }{
        \widetilde{\mathcal{F}}_{i,j}
    }
}
\NewDocumentCommand{\supp}{o o}{%
    \IfValueTF{#1}{%
        \IfValueT{#2}{%
            \mathsf{supp}^{(#1,#2)}
        }
    }{
        \mathsf{supp}^{(i,j)}
    }
}
\NewDocumentCommand{\Legendresymbol}{o o}{%
    \IfValueTF{#1}{%
        \IfValueT{#2}{%
            \genfrac{(}{)}{}{}{#1}{#2}
        }
    }{
            \genfrac{(}{)}{}{}{-1}{p}
    }
}
\begin{document}
\title
{\bf The multiplicity-one theorem for the superspeciality of hyperelliptic curves}
\author{Yuya \textsc{Yamamoto}\thanks{Graduate School of Environment and Information Sciences, Yokohama National University.
E-mail: \texttt{yamamoto-yuya-cm@ynu.jp}}
}
\date{}
\maketitle
\begin{abstract}
\noindent\quad
The multiplicity-one theorem for the simultaneous equations characterizing the superspeciality of hyperelliptic curves was established by Igusa in 1958 for genus one, and later extended by Harashita and Yamamoto in 2026 to genus two. In this paper, we generalize this result to arbitrary genus. Our approach employs the Lauricella system of type D for hypergeometric series in $2g-1$ variables, whose truncations (up to scalar multiplication) give the entries of a Cartier--Manin matrix. The multiplicity-one theorem is obtained through an analysis of equalities involving partial derivatives of these entries.

{\bf Keywords:} algebraic curves,
superspecial curves, hypergeometric series, positive characteristic

{\bf MSC 2020:} 14H10, 33C65, 14G17, 11G20
\end{abstract}

\section{Introduction}
In this paper, we investigate the multiplicity of superspeciality for an entire family of hyperelliptic curves of arbitrary genus.
We begin by presenting the framework of our study, followed by our main theorems.

Let $p$ be an odd prime
and $K$ an algebraically closed field of characteristic $p$.
Let $C$ be the normalization of the projective model of
\[
\quad y^2 = f(x)\coloneqq x(x-1)(x-\lambda_1)(x-\lambda_2)\cdots(x-\lambda_{2g-1})
\]
for $\lambda_1,\ldots,\lambda_{2g-1} \in K$ with $\#\{0,1,\lambda_1,\ldots,\lambda_{2g-1}\} = 2g+1$.
Note that $C$ is a hyperelliptic curve of genus $g$ over $K$ and conversely  any hyperelliptic curve of genus $g$ over $K$ is written in this way.
A curve $C$ over $K$ is called {\it superspecial} if its Jacobian ${\rm Jac}(C)$
is isomorphic to a product of supersingular elliptic curves. 
This is equivalent to that the Cartier operator on $H^0(C,{\varOmega}_C)$ is zero (cf.~\cite[Theorem 4.1]{Nygaard}).
The matrix (called the Cartier--Manin matrix) representing the operator with respect to the basis $dx/y,xdx/y,\ldots,x^{g-1}dx/y$\ 
is described as
\[
\begin{pmatrix} 
  c_{p-1} & c_{p-2} & \dots  & c_{p-g} \\
  c_{2p-1} & c_{2p-2} & \dots  & c_{2p-g} \\
  \vdots & \vdots & \ddots & \vdots \\
  c_{gp-1} & c_{gp-2} & \dots  & c_{gp-g}
\end{pmatrix}
\]
where $c_k$ is the $x^k$-coefficient of $f(x)^{(p-1)/2}$, see \cite[p.~79]{Manin}.
We shall regard $\lambda_1,\ldots,\lambda_{2g-1}$ as indeterminates and consider  $c_{ip-j}$ as polynomials in $\lambda_1,\ldots,\lambda_{2g-1}$ for $1\le i,j\le g$.
The space $V(c_{ip-j}|1\le i,j\le g)$ consisting of points where $c_{ip-j}$ vanish for all $i,j$ with $1\leq i,j\le g$ is called the {\it superspecial locus}, which is known to consist of finite points.
The aim of this paper is to prove the ideal generated by $c_{ip-j}$ for $1\le i, j\le g$ is of multiplicity-one at any point of $V(c_{ip-j}|1\le i,j\le g)$.
To achieve this, we use the Lauricella hypergeometric series of type (D) in $2g-1$ variables and partial differential equations they satisfy.
The fact that entries of the Cartier--Manin matrix are described by truncations of Lauricella hypergeometric series is due to Ohashi and Harashita (\cite[Theorem 6.5]{OH}), which we review in Definition~\ref{def:Truncation} and Fact~\ref{fact:transition}.
Throughout this paper, we denote by $\mathbb{N}_0$ the set of nonnegative integers, and we use a symbol $\an{x} = x(x+1)(x+2)\cdots(x+n-1)$ for $n \in \mathbb{N}_0$.
(We consider $\an[0]{x}=1$ for $n=0.$)
Lauricella hypergeometric series of type (D) in $2g-1$ variables is defined as follows (cf.~Definition \ref{def:Appell}):
\begin{equation}\label{eq:L-hyp-geom-series}
\ALseries
\coloneqq 
 \sum_{(n_1,\ldots,n_{2g-1})\in \mathbb{N}_0^{2g-1}} A_{n_1,\ldots,n_{2g-1}}\zzz,
\end{equation}
where
\begin{equation}
    A_{n_1,\ldots,n_{2g-1}}\coloneqq\subALseriesdef
\end{equation}
with $a,b_1,\ldots,b_{2g-1},c \in \mathbb{C},-c \notin \mathbb{N}_0$.
In this paper, the hypergeometric series is considered as an element of the ring of formal power series in $z_1,\ldots,z_{2g-1}$.

The hypergeometric series $w\coloneqq\ALseries$ satisfies the partial differential equations $\diff{\ell} w=0$ for $\ell=1,\ldots,2g-1$ 
and $\diffsimp{\ell}{m} w=0$ for $1\le \ell<m\le 2g-1$, where
\begin{eqnarray*}
\diff{\ell}
&=&z_\ell(1-z_\ell)\pdd{z_\ell}+\sum_{\substack{1 \le k \le 2g-1,\\k \neq \ell}}
{ z_k(1-z_\ell)\genpdd{z_\ell}{z_k} }  \notag\\
&&+\left(c-(a+b_\ell+1)z_\ell\right)\pd{z_\ell}
-\sum_{\substack{1 \le k \le 2g-1,\\k \neq \ell}}{ b_\ell z_k \pd{z_k} }-ab_\ell
\end{eqnarray*}
and
\begin{equation*}
\diffsimp{\ell}{m}\coloneqq
(z_\ell-z_m)\dfrac{\partial^2}{\partial z_\ell\partial z_m}
-b_m\dfrac{\partial}{\partial z_\ell}
+b_\ell\dfrac{\partial}{\partial z_m}.
\end{equation*}
Let $j\in\{1,\ldots,g\}$.
Let $\diffchar{\ell}{j}$ and $\diffsimp{\ell}{m}$, for $\ell,m\in\{1,\ldots,2g-1\}$ with $\ell<m$, denote the partial differential operators ${\mathcal D}_\ell$ and $\diffsimp{\ell}{m}$, respectively, associated with $a = (2g+1)/2 - j$, $(b_1,b_2,\ldots,b_{2g-1})=(1/2,1/2,\ldots,1/2)$, and $c = g-j+1$.

From the result (\cite[Theorem 6.5]{OH}) of Ohashi--Harashita we mentioned above, we naturally expect that the operators $\diffchar{\ell}{j}$ and $\diffsimp{\ell}{m}$ annihilate $c_{ip-j}$.
The first theorem asserts that this indeed holds:
\begin{restatable}{theor}{thmA}\label{thm:PDEinPositiveChar}
For every $i,j$ with $1\le i,j \le g$, the entry $c_{ip-j}$ of the Cartier--Manin matrix
satisfies
\begin{equation*}
\diffchar{\ell}{j} c_{ip-j} = 0
\end{equation*}
for $\ell=1,2,\ldots,2g-1$ and
\begin{equation*}
\diffsimp{\ell}{m} c_{ip-j} = 0
\end{equation*}
for $1\le \ell < m \le 2g-1$.
\end{restatable}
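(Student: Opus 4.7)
The approach is to reduce the statement to the classical Lauricella identities via the description of $c_{ip-j}$ as a truncated hypergeometric series. By Fact~\ref{fact:transition} of Ohashi--Harashita, the entry $c_{ip-j}$ is, up to a nonzero scalar in $K$, the mod-$p$ reduction of a polynomial truncation $\widetilde{\mathcal{F}}_{i,j}$ of the Lauricella series $\mathcal{F}(a,b_1,\ldots,b_{2g-1},c;z)$ at the parameters $a=(2g+1)/2-j$, $b_1=\cdots=b_{2g-1}=1/2$, $c=g-j+1$. Since the operators $\mathcal{D}_\ell^{(j)}$ and $\mathcal{D}_{\ell,m}$ are $K$-linear and act on the $z$-variables alone, it is enough to prove $\mathcal{D}_\ell^{(j)}\widetilde{\mathcal{F}}_{i,j}\equiv 0$ and $\mathcal{D}_{\ell,m}\widetilde{\mathcal{F}}_{i,j}\equiv 0$ modulo $p$.

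Working first in characteristic zero with the full series $\mathcal{F}=\sum_{\mathbf{n}}A_{\mathbf{n}}z^{\mathbf{n}}$, the classical identities $\mathcal{D}_\ell\mathcal{F}=0$ and $\mathcal{D}_{\ell,m}\mathcal{F}=0$ are equivalent to explicit linear recurrences on the $A_{\mathbf{n}}$ obtained by comparing coefficients of each monomial. For $\mathcal{D}_{\ell,m}$ the recurrence is the one-step identity $(m_m+1)(m_\ell+\tfrac12)A_{\mathbf{m}+e_m}=(m_\ell+1)(m_m+\tfrac12)A_{\mathbf{m}+e_\ell}$, which follows directly from the explicit Pochhammer formula for $A_{\mathbf{m}}$; for $\mathcal{D}_\ell^{(j)}$ it is a longer combination involving $A_{\mathbf{m}}$, $A_{\mathbf{m}+e_\ell}$, and $A_{\mathbf{m}+e_k}$ for $k\ne\ell$. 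Applying the same operator to the truncation $\widetilde{\mathcal{F}}_{i,j}$ yields exactly these combinations monomial by monomial, and whenever every multi-index appearing in the recurrence still lies in the truncation index set, the contributions cancel in $K$. Hence the coefficient of $z^{\mathbf{m}}$ in $\mathcal{D}_\ell^{(j)}\widetilde{\mathcal{F}}_{i,j}$ or $\mathcal{D}_{\ell,m}\widetilde{\mathcal{F}}_{i,j}$ can be nonzero only at ``boundary'' multi-indices $\mathbf{m}$ for which some shift $\mathbf{m}+e_k$ exits the truncation.

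The heart of the proof is to check that these boundary contributions all vanish modulo $p$. The truncation range arising from Fact~\ref{fact:transition} is cut out by conditions on the total degree $|\mathbf{n}|$ and on each individual $n_k$ (the latter inherited from the fact that $f^{(p-1)/2}$ is a polynomial), and a multi-index leaves the range exactly when one of these thresholds is crossed. The specific parameters $a=(2g+1)/2-j$ and $c=g-j+1$ are arranged so that the recurrence coefficient multiplying the absent neighbour, which is a product of shifted Pochhammer factors such as $a+|\mathbf{m}|$, $a+|\mathbf{m}|-1$, and $c+|\mathbf{m}|-1$, contains a factor that becomes divisible by $p$ precisely at the threshold. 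Tracking this $p$-divisibility for each boundary type then yields the required vanishing; I would treat the simpler operator $\mathcal{D}_{\ell,m}$ first to isolate the mechanism and then adapt the argument to $\mathcal{D}_\ell^{(j)}$. The main obstacle I expect is the combinatorial bookkeeping when $\mathbf{m}$ straddles several thresholds simultaneously, so that multiple summands drop out at once and one must verify that each residual term is still divisible by $p$.
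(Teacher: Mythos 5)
Your proposal follows essentially the same route as the paper: reduce to the truncation $\truncatedAL$ via Fact~\ref{fact:transition}, compare coefficients monomial by monomial so that interior indices cancel by the standard recurrence, and show that at each boundary of the support the Pochhammer factor multiplying the surviving term (e.g.\ $b'+n_k$ at $n_k=(p-1)/2$, $a'+\lvert\bm{n}\rvert$ at $\lvert\bm{n}\rvert=d'$, $c'+\lvert\bm{n}\rvert$ at $\lvert\bm{n}\rvert=d'-(p-1)/2-1$) is divisible by $p$. The ``straddling several thresholds'' worry is harmless, since each threshold condition independently kills the whole coefficient, so the paper simply runs a short exhaustive case analysis.
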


In the genus-one case, it is well-known (cf.~\cite[p.255]{Deuring} and \cite[Theorem V.4.1]{Silverman}) that the elliptic curve of Legendre form $ y^2 = x(x-1)(x-t)$ over $K$ 
is supersingular if and only if
$H_p(t) =0$, where $\displaystyle H_p(t) \coloneqq \sum_{i=0}^{(p-1)/2}{\binom{(p-1)/2}{i}}^2 t^i$, which is equal, up to the sign $(-1)^{(p-1)/2}$, to the coefficient of $x^{p-1}$ in the expression $\left( x(x-1)(x-t) \right)^{(p-1)/2}$.
The separability of the polynomial $H_p(t)$ was proved by Igusa \cite{Igusa}.
The key part of his proof lies in the use of a differential operator which vanishes $H_p(z)$:
$\mathop{\mathcal D} = z(1-z) d^2/dz^2 + (c-(a+b+1)z)d/dz-ab$
for $a=1/2$, $b=1/2$ and $c=1$.

For $a,b,c \in \mathbb{C}$ with $-c \notin \mathbb{N}_0$,
over $\mathbb C$, the differential operator $\mathop{\mathcal D}$ annihilates the Gauss hypergeometric series $\displaystyle F(a,b,c\,;z) = \sum_{n\in\mathbb{N}_0} \dfrac{\an{a}\an{b}}{\an{c}\an{1}}z^n$.
For the choice $a=1/2,b=1/2,c=1$, $H_p(t)$ equals the truncation of $F(1/2,1/2,1\,;z)$ by degree $m$.

For special families of genus-$2$ curves, similar results have been obtained by Ibukiyama, Katsura, and Oort~\cite[Proposition 1.14]{IKO}.
The Cartier--Manin matrix for the curve of genus two in Rosenhain form $y^2 = f(x)\coloneqq x(x-1)(x-\lambda_1)(x-\lambda_2)(x-\lambda_3)$ is
\[
\begin{pmatrix}
    c_{p-1} & c_{p-2}\\
    c_{2p-1} & c_{2p-2}
\end{pmatrix},
\]
and the result on multiplicity-one for this entire family of genus-two curves has been obtained by Harashita and Yamamoto~\cite{HY}.
In \cite{HY}, equalities involving partial derivatives of $c_{ip-j}$, called contiguity relations, are as follows; they played a key role in proving multiplicity-one(\cite[Theorem 5.3]{HY}).
\begin{align*}
\left(\sum_{k=1}^3 (z_k^2-z_k)\partial_k\right) c_{ip-1} &=-\dfrac{1}{2}(z_1+z_2+z_3-2)c_{ip-1}-\dfrac{1}{2}c_{ip-2},\\
\left(\sum_{k=1}^3 (1-z_k)\partial_k\right) c_{ip-2} &= \dfrac{1}{2}(c_{ip-1}+c_{ip-2}).
\end{align*}

In this paper, we prove more fundamental equalities (Theorem~\ref{thm:Contiguity_Relations} and Corollary~\ref{cor:Relations}) in the framework of arbitrary genus which generate the contiguity relations.
With the help of these equations and Theorem~\ref{thm:PDEinPositiveChar}, we obtain our main theorems:
\begin{restatable}{theor}{thmB}\label{thm:SspImpliesNonsing}
For any solution $(\lambda_1,\ldots,\lambda_{2g-1})$ of the equations
\[
c_{ip-j}(z_1,\ldots,z_{2g-1}) = 0
\]
with $1\le i,j \le g$, the curve $y^2 = x(x-1)(x-\lambda_1)(x-\lambda_2)\cdots(x-\lambda_{2g-1})$ is nonsingular.
More precisely, $0,1,\lambda_1,\ldots,\lambda_{2g-1}$ are distinct.
\end{restatable}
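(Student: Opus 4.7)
I argue by contradiction. Suppose $P = (\lambda_1,\ldots,\lambda_{2g-1})$ is a common zero of all $c_{ip-j}$, yet the $2g+1$ values in $\{0,1,\lambda_1,\ldots,\lambda_{2g-1}\}$ are not distinct. Because the system is $S_{2g-1}$-symmetric in $\lambda_1,\ldots,\lambda_{2g-1}$, up to permutation I may reduce to one of three canonical coincidences: (a) $\lambda_1 = 0$, (b) $\lambda_1 = 1$, or (c) $\lambda_1 = \lambda_2$. The substitution $x \mapsto 1-x$ (which sends $\lambda_k \mapsto 1-\lambda_k$ and preserves the Rosenhain form) converts case (b) to case (a), so only (a) and (c) need to be treated. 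The base case $g = 1$ is Igusa's classical observation that $H_p(0) = 1$ and $H_p(1) \equiv (-1)^{(p-1)/2} \pmod{p}$ are both nonzero.

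For case (a), the factorization $f(x) = x^2 \tilde f(x)$ with $\tilde f(x) = (x-1)\prod_{k \ge 2}(x-\lambda_k)$ of degree $2g-1$ yields $f^{(p-1)/2} = x^{p-1}\tilde f^{(p-1)/2}$ and hence $c_{ip-j}|_{\lambda_1 = 0} = [x^{(i-1)p + 1 - j}]\tilde f^{(p-1)/2}$. By a degree count, $c_{p-j}|_{\lambda_1 = 0}$ vanishes identically for $j \ge 2$; the single nontrivial equation in row $i = 1$, namely $c_{p-1}|_{\lambda_1 = 0} = (-1)^{(p-1)/2}\bigl(\prod_{k \ge 2}\lambda_k\bigr)^{(p-1)/2}$, forces a further $\lambda_m = 0$. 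After permutation I assume $\lambda_1 = \lambda_2 = 0$, and then the conditions $c_{ip-j}|_P = 0$ for $2 \le i,j \le g$ coincide exactly with the vanishing of the Cartier--Manin matrix of the genus-$(g-1)$ Rosenhain curve $\tilde C: y^2 = x(x-1)\prod_{k \ge 3}(x-\lambda_k)$. Induction on $g$ then gives that $\{0,1,\lambda_3,\ldots,\lambda_{2g-1}\}$ is a set of distinct points. What remains is to rule out the compatibility of the residual equations $c_{ip-1}|_P = [x^{(i-1)p}]\tilde f^{(p-1)/2} = 0$ (for $2 \le i \le g$) with $\tilde C$ being nonsingular and superspecial; I would close this by applying Theorem~\ref{thm:PDEinPositiveChar} at $z_1 = 0$, where the leading second-order term $z_1(1-z_1)\partial_1^2$ of $\mathcal{D}_1^{(j)}$ drops out leaving a first-order identity on the $\partial_k c_{ip-j}|_P$, and combining it with the contiguity relations of Corollary~\ref{cor:Relations} which tie the residual equations back to entries already known to vanish.

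For case (c), set $\lambda_1 = \lambda_2 = t$ and write $f(x) = (x-t)^2 h(x)$ with $h(x) = x(x-1)\prod_{k \ge 3}(x-\lambda_k)$. In characteristic $p$ the Fermat identity $(x-t)^{p-1} = \sum_{r=0}^{p-1} t^{p-1-r}x^r$ expresses each $c_{ip-j}|_P$ as a $t$-weighted sum of coefficients of $h^{(p-1)/2}$. The PDE $\mathcal{D}_{1,2} c_{ip-j} = 0$ collapses at the diagonal to $\partial_1 c_{ip-j}|_P = \partial_2 c_{ip-j}|_P$; coupling this symmetry with Corollary~\ref{cor:Relations} should yield the overdetermined linear system needed for the contradiction. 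The main obstacle is precisely case (c): unlike case (a), the factor $(x-t)^2$ does not reduce the degree of $f$, and the $t$-dependent redistribution of the coefficients of $h^{(p-1)/2}$ obstructs any naive binomial argument. The decisive input is therefore the full PDE-plus-contiguity machinery developed in Theorem~\ref{thm:PDEinPositiveChar} and Corollary~\ref{cor:Relations}, which the paper introduces precisely to handle such diagonal configurations.
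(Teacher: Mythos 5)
Your reduction framework diverges from the paper's at a decisive point, and the divergence creates the main gap. The paper does not treat $\lambda_k=\lambda_\ell$ as a separate case at all: since the vanishing of the Cartier--Manin matrix is invariant under M\"obius transformations of the branch points (\cite[Proposition 2.2]{Yui}), any coincidence among $0,1,\lambda_1,\ldots,\lambda_{2g-1}$ --- including a diagonal coincidence $\lambda_1=\lambda_2=t$ --- can be normalized to ``some $\lambda_k=0$'' by a projective change of coordinate sending $t$ to $0$ and restoring the Rosenhain form. Your case (c), which you yourself flag as ``the main obstacle'' and for which you offer only a speculative programme (the Fermat expansion of $(x-t)^{p-1}$, the collapse of $\diffsimp{1}{2}$ on the diagonal, an unspecified ``overdetermined linear system''), therefore never needs to be confronted separately; as written it is not a proof, and it is far from clear the route you sketch can be completed. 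The single normalized case $\lambda_{2g-1}=0$ suffices.

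Second, even in your case (a) the endgame is not pinned down, and the missing ingredient is substantive: the existence of a nonsingular superspecial genus-$(g-1)$ curve is no contradiction by itself, so everything hinges on showing that the residual equations $c_{ip-1}(\lambda)=\delta_{(i-1)p}=0$ are incompatible with the rest. The paper's mechanism is a rank count on the Jacobian of the genus-$(g-1)$ system: Theorem~\ref{thm:Contiguity_Relations}({\romannumeral 1}) with $k=2g-2,\,2g-1$ and $\lambda_{2g-2}=\lambda_{2g-1}=0$ converts the residual equations into $w_{2g-2}^{(2)}=w_{2g-1}^{(2)}=0$, whence Theorem~\ref{thm:Contiguity_Relations}({\romannumeral 2}) yields one extra linear relation \eqref{eq:add_relation_of_minus_one} on the rows $\widetilde{w}_k^{(1)}$ beyond the $g-2$ relations coming from Corollary~\ref{cor:Relations}; Lemma~\ref{lem:linear_elimination} then forces $g-1$ of these rows to be linearly dependent, contradicting Lemma~\ref{lem:rank_of_submatrices} for genus $g-1$. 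That last lemma rests on the full-rank (multiplicity-one) machinery for genus $g-1$, so Theorems~\ref{thm:SspImpliesNonsing} and~\ref{thm:MultiplicityOne} must be proved by a joint induction --- a structural point absent from your plan, where only the distinctness statement is fed back into the induction. Finally, your suggestion to evaluate $\diffchar{1}{j}$ at $z_1=0$ does not in fact produce a first-order identity (the cross terms $z_k\,\partial_1\partial_k$ survive), and it is not an evident substitute for the rank argument.
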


\begin{restatable}{theor}{thmC}\label{thm:MultiplicityOne}
The scheme defined by the ideal generated by $\setsymbol{c_{ip-j}}{1\le i,j \le g}$ of ${\mathbb F}_p[z_1,\ldots,z_{2g-1}]$ is reduced.
(This is equivalent to saying that the scheme is nonsingular.
It is well-known that the scheme is zero-dimensional.)
\end{restatable}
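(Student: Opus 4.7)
The plan is to reformulate reducedness as a rank condition on the Jacobian of the defining ideal and then to verify that condition at every superspecial point using the tools developed earlier. Since the scheme $V = V(c_{ip-j} : 1 \le i,j \le g) \subset \mathbb{A}^{2g-1}$ is zero-dimensional, it is reduced at $P = (\lambda_1,\ldots,\lambda_{2g-1})$ if and only if the ideal $I = (c_{ip-j})$ equals $\mathfrak{m}_P$ in the local ring, equivalently if the $g^2$ differentials $dc_{ip-j}(P)$ span the cotangent space $\mathfrak{m}_P/\mathfrak{m}_P^2 \cong K^{2g-1}$. Thus the theorem reduces to proving that the Jacobian matrix $J(P) = \bigl(\partial c_{ip-j}/\partial z_k(P)\bigr)$ of size $g^2 \times (2g-1)$ has full column rank $2g-1$ at every common zero $P$.

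The first substantive step is to apply Theorem~\ref{thm:SspImpliesNonsing} to ensure that $0, 1, \lambda_1, \ldots, \lambda_{2g-1}$ are pairwise distinct. This makes the coefficients $\lambda_k$, $1-\lambda_k$, and $\lambda_k - \lambda_m$ nonzero, so that the operators $\diffchar{\ell}{j}$ and $\diffsimp{\ell}{m}$ and the contiguity relations of Corollary~\ref{cor:Relations} all remain nondegenerate after specialising to $P$. Evaluating those contiguity relations at $P$ causes the right-hand sides to vanish, yielding explicit linear equations in the partial derivatives $\partial_k c_{ip-j}(P)$ whose coefficients are rational expressions in the $\lambda_k$. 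The PDEs from Theorem~\ref{thm:PDEinPositiveChar}, although containing second-order terms, can be paired with these first-order relations (e.g.\ by differentiating the contiguity relations or by comparing operators across different pairs $(i,j)$) to produce further linear constraints on the gradients.

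The decisive step, and the main obstacle, is to show that no nonzero cotangent vector $v = (v_1, \ldots, v_{2g-1}) \in K^{2g-1}$ lies in the kernel of $J(P)$. There are $g^2$ relations but only $2g-1$ unknowns, yet the contiguity relations primarily constrain derivatives within a single gradient, so one must carefully combine them across different $(i,j)$ to separate the gradients for distinct entries. A plausible strategy is to fix one column of the Cartier--Manin matrix and use Corollary~\ref{cor:Relations} to express the gradients of the other entries in terms of the chosen column, thereby reducing the rank question to a single linear independence statement that should succumb to a Vandermonde-type determinant computation exploiting the distinctness of $0, 1, \lambda_1, \ldots, \lambda_{2g-1}$. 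Verifying that the resulting determinant is a nonvanishing element of $\mathbb{F}_p[z_1,\ldots,z_{2g-1}]$ on the superspecial locus, uniformly in $g$, is the delicate point where all the previous structural results must fit together.
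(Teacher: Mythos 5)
Your reduction to the Jacobian criterion, the appeal to Theorem~\ref{thm:SspImpliesNonsing} for distinctness of $0,1,\lambda_1,\ldots,\lambda_{2g-1}$, and the idea of combining the contiguity relations of Corollary~\ref{cor:Relations} with a Vandermonde-type elimination are all in line with the paper's actual proof. But the two steps you label as ``the decisive step'' and ``the delicate point'' are precisely where the content lies, and your proposal does not carry either of them out; as written there is a genuine gap.

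First, the contiguity relations only constrain the matrix in the \emph{row} direction: at a superspecial point they give, for each $i$, the relations $\sum_k \lambda_k^r(\lambda_k-1)\,\partial_k c_{ip-1}(\lambda)=0$ for $r=1,\ldots,g-1$, which (via the elimination lemma) show that the row space of the first column block is spanned by any $g$ of the $2g-1$ rows $w_k^{(1)}$. This bounds the rank of that block above by $g$; it does not give a lower bound, and no Vandermonde determinant in the $\lambda_k$ alone can, because the needed input is independence in the \emph{column} direction: that the $g$ gradients $\nabla c_{p-j}(\lambda),\ldots,\nabla c_{gp-j}(\lambda)$ within a single block are linearly independent. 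The paper's proof of this (Lemma~\ref{lem:linear_independence}) is not a determinant computation but a rigidity argument: a linear relation among the degree-$\le 1$ Taylor coefficients at $\lambda$ propagates, via the partial differential equations of Theorem~\ref{thm:PDEinPositiveChar} and the standard recurrence relation they impose, to the identity $c_{\ell p-j}=\sum_{i\ne\ell}\widetilde{\alpha}_i c_{ip-j}$ as polynomials, which is impossible because the supports $\supp[1][j],\ldots,\supp[g][j]$ are pairwise disjoint. Your suggestion to use the PDEs merely ``to produce further linear constraints on the gradients'' misses this; the PDEs are used to upgrade a pointwise first-order coincidence to a global polynomial identity, not to add rows to the linear system.

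Second, even granting that one block has rank exactly $g$, you still need $g-1$ more independent directions to reach $2g-1$, and ``express the gradients of the other entries in terms of the chosen column'' does not explain where they come from. The paper manufactures them as the auxiliary row combinations $\widehat{w}_\ell=\sum_k\lambda_k^\ell(\lambda_k-1)w_k$, which by Corollary~\ref{cor:Relations} vanish on the first $g-\ell$ blocks and whose entry in block $g-\ell+1$ is shown to be nonzero by running the elimination lemma with one extra relation and invoking Lemma~\ref{lem:rank_of_submatrices} again. Neither this construction nor any substitute for it appears in your outline, so the passage from rank $g$ to rank $2g-1$ is asserted rather than proved. (A smaller omission: the paper proves Theorems~\ref{thm:SspImpliesNonsing} and~\ref{thm:MultiplicityOne} by a joint induction on $g$, since the nonsingularity statement at genus $g$ itself uses the multiplicity-one theorem at genus $g-1$; if you cite Theorem~\ref{thm:SspImpliesNonsing} as a black box you should at least note this interdependence.)
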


\begin{Rem}\label{rem:restriction_on_primes}
 If $g > (p-1)/2$, there is no superspecial hyperelliptic curve of genus $g$ in characteristic $p$ by Ekedahl \cite[Theorem 1.1 in Section 2]{Ekedahl}, whence the above theorems hold trivially.
 Hence we will assume $g \le (p-1)/2$ in the proof (cf.\ Section 3).
\end{Rem}

Theorem~\ref{thm:MultiplicityOne} can be regarded as a generalization of the result by Igusa \cite{Igusa} in the case of genus-one and the result by Harashita--Yamamoto \cite{HY} in the case of genus-two.
We call this theorem {\it the multiplicity-one theorem for the superspeciality of hyperelliptic curves $y^2=x(x-1)(x-z_1)(x-z_2)\cdots(x-z_{2g-1})$.}

This paper is organized as follows.
In Section 2, We review some facts about the Lauricella hypergeometric series and its truncation, in connection with Cartier--Manin matrices.
In Section 3, we collect fundamental properties of the Lauricella series related to superspeciality and study partial differential equations involving entries of a Cartier--Manin matrix, which will be used in the proof of the multiplicity-one theorem.
In Section 4, We employ induction on the genus to prove Theorem~\ref{thm:SspImpliesNonsing}, and subsequently Theorem~\ref{thm:MultiplicityOne} by using Theorem~\ref{thm:SspImpliesNonsing}, thereby deducing the multiplicity-one theorem for every genus.

\subsection*{Acknowledgements}
This paper forms part of the author's thesis.
The author expresses sincere gratitude to his advisor, Professor Shushi Harashita, for his invaluable guidance, encouragement, and patience throughout the course of this work.
Without such support, this achievement would not have been possible.
This work was supported by YNU-SPRING.

\section{Preliminaries}
In this section, we review the Lauricella hypergeometric series and the Cartier--Manin matrix.

\subsection{Lauricella hypergeometric series}
For an element $\bm{n}=(n_1,\ldots,n_{2g-1})$ of $\mathbb{N}_0^{2g-1}$, let $\lvert \bm{n} \rvert = n_1+\dots+n_{2g-1}$ denote the total degree of the monomial $z_1^{n_1} \dotsm z_{2g-1}^{n_{2g-1}}$.
We use the standard basis vectors $\bm{e}_k$ with $1$ in the $k$-th row and all other entries zero.

The Lauricella hypergeometric series in $g$ variables is defined as follows:
\begin{Def}\label{def:Appell}
Consider the series $\ALseries$:
\begin{equation}
\ALseriesdef \zzz \in \mathbb{C}\fps, \label{eq:Appell3}
\end{equation}
where $a,b_1,\ldots,b_{2g-1},c \in \mathbb{C},-c \notin \mathbb{N}_0$.
We call it the Lauricella hypergeometric series in $2g-1$ variables
with respect to $(a,b_1,\ldots,b_{2g-1},c)$.
\end{Def}

The Lauricella hypergeometric series satisfies the following partial differential equations,
see $E_D(a,(b),c)$ in \cite[Table 3.2 in Section 3.5]{Matsumoto}.

\begin{Thm}\label{thm:PDEinCharzero}
The Lauricella hypergeometric series in $2g-1$ variables $w\coloneqq\ALseries$ satisfies
\begin{equation}\label{eq:main}
    \diff{\ell}w=0
\end{equation}
for $\ell = 1,\ldots,2g-1$ and
\begin{equation}\label{eq:main2}
    \diffsimp{\ell}{m}w=0
\end{equation}
for $1\le \ell < m\le 2g-1$, where the partial differential operators $\diff{1},\ldots,\diff{2g-1}$ are given by
\begin{align*}
\diff{1}
=&z_1(1-z_1)\pdd{z_1}+z_2(1-z_1)\genpdd{z_1}{z_2}+\cdots+z_{2g-1}(1-z_1)\genpdd{z_1}{z_{2g-1}}\\
&+\left(c-(a+b_1+1)z_1\right)\pd{z_1}-b_1 z_2\pd{z_2}-\cdots-b_1 z_{2g-1}\pd{z_{2g-1}}-a b_1,
\end{align*}
\begin{align*}
\diff{2}
=&z_1(1-z_2)\genpdd{z_1}{z_2}+z_2(1-z_2)\pdd{z_2}
+z_3(1-z_2)\genpdd{z_2}{z_3}\\
&+\cdots+z_{2g-1}(1-z_2)\genpdd{z_2}{z_{2g-1}}-b_2 z_1\pd{z_1}\\
&+\left(c-(a+b_2+1)z_2\right)\pd{z_2}-b_2 z_3\pd{z_3}-\cdots-b_2 z_{2g-1}\pd{z_{2g-1}}-a b_2,
\end{align*}
\begin{center}
$\vdots$
\end{center}
\begin{align*}
\diff{2g-1}
=&z_1(1-z_{2g-1})\genpdd{z_1}{z_{2g-1}}+\cdots+z_{2g-2}(1-z_{2g-1})\genpdd{z_{2g-2}}{z_{2g-1}}\\
&+z_{2g-1}(1-z_{2g-1})\pdd{z_{2g-1}}-b_{2g-1} z_1\pd{z_1}-\cdots-b_{2g-1} z_{2g-2}\pd{z_{2g-2}}\\
&+\left(c-(a+b_{2g-1}+1)z_{2g-1}\right)\pd{z_{2g-1}}-a b_{2g-1},
\end{align*}
and 
\begin{equation}
\diffsimp{\ell}{m}\coloneqq
(z_\ell-z_m)\dfrac{\partial^2}{\partial z_\ell\partial z_m}
+b_\ell\dfrac{\partial}{\partial z_m}
-b_m\dfrac{\partial}{\partial z_\ell}
\end{equation}
for $1\le \ell < m \le 2g-1$.
\end{Thm}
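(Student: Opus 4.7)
The plan is to verify both families of PDEs by a direct coefficient-by-coefficient substitution of $w=\ALseries$ into each operator. The only algebraic input needed is the one-step index-shift recurrence
\[
A_{\bm{n}+\bm{e}_k}=\frac{(a+|\bm{n}|)(b_k+n_k)}{(c+|\bm{n}|)(n_k+1)}\,A_{\bm{n}},
\]
which follows immediately from $\an[n+1]{x}=(x+n)\an{x}$ applied to the closed form of $A_{\bm{n}}$.

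For $\diffsimp{\ell}{m}$, I apply the operator term-by-term to $A_{\bm{n}}\zzz$ and re-index each resulting monomial to a common exponent vector $\bm{m}$. The coefficient of $z^{\bm{m}}$ then picks up exactly two contributions, namely $A_{\bm{m}+\bm{e}_m}(m_m+1)(m_\ell+b_\ell)$ from the $z_\ell$-part and $-A_{\bm{m}+\bm{e}_\ell}(m_\ell+1)(m_m+b_m)$ from the $-z_m$-part. The recurrence rewrites both in terms of $A_{\bm{m}}$, reducing each to the same symmetric quantity $\frac{(a+|\bm{m}|)(b_\ell+m_\ell)(b_m+m_m)}{c+|\bm{m}|}A_{\bm{m}}$ with opposite signs, so they cancel.

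For $\diff{\ell}$, I apply the same strategy but with more bookkeeping. I split the operator into a second-order piece (the diagonal $z_\ell(1-z_\ell)\pdd{z_\ell}$ together with the cross terms $z_k(1-z_\ell)\genpdd{z_\ell}{z_k}$ for $k\neq\ell$), a first-order piece (the $\pd{z_k}$ terms), and the constant $-ab_\ell$. Every term shifts the exponent of the generic monomial $\zzz$ by either $\bm{0}$ or $-\bm{e}_\ell$, so after re-indexing the coefficient of $z^{\bm{m}}$ is a linear combination of $A_{\bm{m}}$ and $A_{\bm{m}+\bm{e}_\ell}$. Using the $\ell$-direction instance of the recurrence to consolidate, the total coefficient becomes $A_{\bm{m}}$ times a rational expression in $a,b_\ell,c,m_1,\ldots,m_{2g-1}$, which one verifies vanishes by separating the terms involving $|\bm{m}|$ from those involving $m_\ell$ alone.

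The main obstacle is this last bookkeeping step: the cross-derivative terms $z_k(1-z_\ell)\genpdd{z_\ell}{z_k}$ and the first-order terms $-b_\ell z_k\pd{z_k}$ each produce monomials whose exponent differs from $\bm{n}$ in two coordinates, and the summations over $k\neq\ell$ must be organized so that the telescoping is transparent rather than hidden in long expressions. Since the result is classical---it is precisely the system $E_D(a,(b),c)$ recorded in \cite[Table~3.2]{Matsumoto}---the paper may alternatively just cite that reference in place of a self-contained verification.
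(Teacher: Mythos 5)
Your proposal is correct, and it is essentially the verification the paper relies on: the paper itself gives no proof of this theorem (it cites $E_D(a,(b),c)$ in Matsumoto's table, as you anticipate), but the coefficient identity you derive for $\diff{\ell}$ is exactly the one recorded in Remark~\ref{rem:StandardRelation}, and your cancellation for $\diffsimp{\ell}{m}$ is the same computation carried out (in characteristic $p$, with truncation bookkeeping) in the proof of Theorem~\ref{thm:PDEinPositiveChar}. One small note: the ``obstacle'' you flag at the end is illusory, since in $z_k(1-z_\ell)\genpdd{z_\ell}{z_k}$ and $-b_\ell z_k\pd{z_k}$ the multiplication by $z_k$ cancels the shift from $\pd{z_k}$, so every term moves the exponent by $\bm{0}$ or $-\bm{e}_\ell$ exactly as you correctly state earlier, and the coefficient of $\bm{z}^{\bm{n}}$ involves only $A_{\bm{n}}$ and $A_{\bm{n}+\bm{e}_\ell}$.
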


\begin{Rem}\label{rem:StandardRelation}
Let $K$ be any field.
The partial differential operators $\diff{\ell}$ for $\ell = 1,\ldots,2g-1$ can be considered if $a,b_1,\ldots,b_{2g-1},c\in K$.
Let $F$ be any element of $K\fps$, say
\[
F = \sum_{\nnn\in\mathbb{N}_0} B_{\nnn}\zzz.
\]
A direct computation shows that the $\zzz$-coefficient of
$\diff{1} F$ is
\[
(1+n_1)(c+\lvert\bm{n}\rvert)B_{\bm{n}+\bm{e}_1} -(b_1+n_1)(a+\lvert\bm{n}\rvert) 
B_{\bm{n}},
\]
where $\bm{n}=(\nnn)$.
If the characteristic of $K$ is $0$ and $\diff{1}F=0$,
then we have the recurrence relation
\begin{equation}\label{eq:StandardRelation}
B_{\bm{n}+\bm{e}_1} = \frac{(b_1+n_1)(a+\lvert\bm{n}\rvert)}{(1+n_1)(c+\lvert\bm{n}\rvert) }B_{\bm{n}}.
\end{equation}
We call this the {\it standard recurrence relation} for $F$ and $\diff{1}$.
Similar things also hold for $\diff{2}F,\ldots,\diff{2g-1}F$.
This shows that if the characteristic of $K$ is $0$, then the partial differential equations \eqref{eq:main}
determine $F$ from $B_{0,\ldots,0}$ provided $B_{0,\ldots,0}\ne 0$.
\end{Rem}

\subsection{Cartier--Manin matrices and hypergeometric series}\label{subsec:CM-HG}
Let $p$ be an odd prime and $K$ an algebraically closed field of characteristic $p$.
Let $g$ be a natural number.
A hyperelliptic curve of genus $g$ over $K$ is realized by $y^2 = f(x)$ with separable polynomial $f(x) \in K[x]$ of degree $2g+1$.
Let us review its Cartier--Manin matrix.

\begin{Def}\label{def:CMMatrix}
For the hyperelliptic curve $C$ defined by $y^2 = f(x)$ with
\[
f(x)=x(x-1)(x-z_1)\cdots(x-z_{2g-1}),
\]
let
\[
f(x)^{\frac{p-1}{2}}=\sum_{k=0}^{\frac{p-1}{2}(2g+1)} {c_k x^k}.
\]
be the expansion of $f(x)^{\frac{p-1}{2}}$.
The Cartier--Manin matrix of $C$ is defined as
\[
M = { \Bigl( c_{ip-j} \Bigr) }_{1\le i,j\le g}
\]
(cf.\ \cite[Section 2]{Yui}).
\end{Def}

We review truncations of the Lauricella hypergeometric series and describe $c_{ip-j}$ in relation to them, following the work of Ohashi and Harashita \cite{OH}.
\begin{Def}[Ohashi--Harashita~{\cite[Definition 6.4 and Example 6.10]{OH}}]\label{def:Truncation}
For $1 \le i,j \le g$, we set $a' \coloneqq (2g+1)/2 - j
= g-j+1/2,\, b' \coloneqq 1/2,\, c' \coloneqq a'+1-1/2 = g-j+1,\, d' \coloneqq (p-1)(2g+1)/2 - ip + j$.
For the Lauricella hypergeometric series
\[
\mathcal{F}(a',b',\ldots,b',c'\,;z_1,\ldots,z_{2g-1})
= \sum_{n_1,\ldots,n_{2g-1}\in\mathbb{N}_0} A^{(i,j)}_{n_1,\ldots,n_{2g-1}} z_1^{n_1} \cdots z_{2g-1}^{n_{2g-1}}
\]
with
\begin{equation}\label{eq:Aij}
A^{(i,j)}_{n_1,\ldots,n_{2g-1}}
\coloneqq\frac{\an[\displaystyle\sum_{k=1}^{2g-1} n_k]{a'} \displaystyle\prod_{k=1}^{2g-1} \an[n_k]{b'}}{\an[\displaystyle\sum_{k=1}^{2g-1} n_k]{c'} \displaystyle\prod_{k=1}^{2g-1} \an[n_k]{1}},
\end{equation}
we define its truncation
$\truncatedAL$
to be  the sum of $z^{n_1} \cdots z^{n_{2g-1}}$-terms 
for $(2g-1)$-tuples $(\nnn)$ satisfying
\begin{equation}\label{eq:truncation_condition}
\begin{cases}
{\displaystyle n_k \le \dfrac{p-1}{2} \quad (k=1,\ldots,2g-1)},\\
{\displaystyle d'-\dfrac{p-1}{2} \le n_1+\cdots+n_{2g-1} \le d'. }
\end{cases}
\end{equation}
We denote the set of exponents of the monomials in $\truncatedAL$ by $\supp$, and Lemma~\ref{lem:support} below implies 
\begin{equation}\label{eq:supp}
\supp = \setsymbolin{\bm{n}}{\mathbb{N}_0^{2g-1}}{\begin{array}{l}
    n_k \le \dfrac{p-1}{2} \quad (k=1,\ldots,2g-1),\\
    d'-\dfrac{p-1}{2} \le \lvert\bm{n}\rvert \le d'
\end{array}}.
\end{equation}
A priori, $\truncatedAL$ is an element of $\mathbb{Q}[z_1,\ldots,z_{2g-1}]$ but since the denominator of the coefficient of each term of $\truncatedAL$ is coprime to $p$ (see Lemma~\ref{lem:support}), we regard $\truncatedAL$
as an element of ${\mathbb{F}_p}[z_1,\ldots,z_{2g-1}]$.
We will write $\widetilde{A}_{\nnn}^{(i,j)} \in \mathbb{F}_p$ for the $z_1^{n_1} \dotsm z_{2g-1}^{n_{2g-1}}$-coefficient of $\truncatedAL$ when we view $\truncatedAL$ as an element of ${\mathbb{F}_p}[z_1,\ldots,z_{2g-1}]$.
\end{Def}

\section{Lauricella series and partial differential equations on superspeciality}
We collect fundamental properties of the Lauricella series related to superspeciality and study partial differential equations involving $c_{ip-j}$.
Building on this foundation, we establish the multiplicity-one theorem in the subsequent section.
For an element $\bm{n}=(n_1,\ldots,n_{2g-1})$ of $\mathbb{N}_0^{2g-1}$, we use the notation $\bm{z}^{\bm{n}}=z_1^{n_1} \dotsm z_{2g-1}^{n_{2g-1}}$, and $\lvert \bm{n} \rvert = n_1+\dots+n_{2g-1}$ for its total degree.
We use the standard basis vectors $\bm{e}_k$ with $1$ in the $k$-th row and all other entries zero.

\subsection{Properties of Lauricella series on superspeciality}
We collect recurrence relations of ${A}_{\nnn}^{(i,j)}$, which will be used later on.
We also review the relation between the Cartier--Manin matrix of a hyperelliptic curve and the associated Lauricella hypergeometric series.

The following lemma is a straightforward generalization of \cite[Lemma 2.9]{HY}.
\begin{Lem}\label{lem:recurrence_relations}
Let $k \in \{1,2,\ldots,2g-1\}$.
\begin{enumerate}
\item[({\romannumeral 1})]
(Standard recurrence relation) We have
\[
{A}_{\bm{n}+\bm{e}_k}^{(i,j)}
=\frac{(g-j+\frac{1}{2}+\lvert\bm{n}\rvert)(\frac{1}{2}+n_k)}{(g-j+1+\lvert\bm{n}\rvert)(1+n_k)}{A}_{\bm{n}}^{(i,j)}
\]
for $n_1,\ldots,n_{2g-1}\ge0$.

\item[({\romannumeral 2})]
For $n_1,\ldots,n_{2g-1}\ge 0$, and for $j<g$, we have
\begin{equation}\label{eq:ALinterchanging}
{A}_{\bm{n}}^{(i,j)} = \frac{(g-j)(g-j-\frac{1}{2}+\lvert\bm{n}\rvert)}{(g-j-\frac{1}{2})(g-j+\lvert\bm{n}\rvert)} {A}_{\bm{n}}^{(i,j+1)}
\end{equation}
and
\begin{equation}\label{eq:ALinterchanging2}
{A}_{\bm{n}+\bm{e}_k}^{(i,j+1)}=\frac{(g-j-\frac{1}{2})(\frac{1}{2}+n_k)}{(g-j)(1+n_k)}{A}_{\bm{n}}^{(i,j)}. 
\end{equation}
\end{enumerate}
\end{Lem}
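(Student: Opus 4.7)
The plan is to verify both parts by direct computation from the explicit product formula
\[
A^{(i,j)}_{\bm n}
= \frac{\an[|\bm n|]{a'}\,\prod_{k=1}^{2g-1} \an[n_k]{b'}}{\an[|\bm n|]{c'}\,\prod_{k=1}^{2g-1} \an[n_k]{1}}
\]
given in Definition~\ref{def:Truncation}, where $(a',b',c')=(g-j+\tfrac12,\tfrac12,g-j+1)$. The two elementary Pochhammer identities that will do all the work are
\[
\an[n+1]{x} = \an[n]{x}\cdot (x+n), \qquad \frac{\an[n]{x}}{\an[n]{x+1}} = \frac{x}{x+n},
\]
both of which follow immediately from $\an[n]{x}=x(x+1)\cdots(x+n-1)$.

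For part (\romannumeral 1), I would form the ratio $A^{(i,j)}_{\bm n+\bm e_k}/A^{(i,j)}_{\bm n}$. Passing from $\bm n$ to $\bm n + \bm e_k$ increases $|\bm n|$ by one and increases $n_k$ by one while leaving the other $n_\ell$ untouched, so only four Pochhammer factors are affected: $\an[|\bm n|]{a'}$, $\an[|\bm n|]{c'}$, $\an[n_k]{b'}$, and $\an[n_k]{1}$. The first identity above converts each of these shifts into a single linear factor, and substituting the numerical values of $a'$, $b'$, $c'$ yields exactly the asserted ratio $(g-j+\tfrac12+|\bm n|)(\tfrac12+n_k)\big/\bigl((g-j+1+|\bm n|)(1+n_k)\bigr)$.

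For part (\romannumeral 2), the transition $j\mapsto j+1$ shifts $a'\mapsto a'-1 = g-j-\tfrac12$ and $c'\mapsto c'-1 = g-j$ while keeping $b'$ and every $n_k$ fixed, so $A^{(i,j)}_{\bm n}/A^{(i,j+1)}_{\bm n}$ reduces to a product of two ratios of the shape $\an[|\bm n|]{x+1}/\an[|\bm n|]{x}$. Applying the second identity above with $x=g-j-\tfrac12$ and $x=g-j$ respectively immediately produces \eqref{eq:ALinterchanging}. The second formula \eqref{eq:ALinterchanging2} can then be obtained by chaining: apply the standard recurrence of part (\romannumeral 1) at the shifted parameter $j+1$ to express $A^{(i,j+1)}_{\bm n + \bm e_k}$ in terms of $A^{(i,j+1)}_{\bm n}$, and then rewrite $A^{(i,j+1)}_{\bm n}$ using the inverse of \eqref{eq:ALinterchanging}; the factors $(g-j-\tfrac12+|\bm n|)$ and $(g-j+|\bm n|)$ cancel, leaving the claimed $(g-j-\tfrac12)(\tfrac12+n_k)\big/\bigl((g-j)(1+n_k)\bigr)$.

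There is essentially no serious obstacle: the lemma is a bookkeeping exercise once the two Pochhammer identities above are isolated. The only care needed is to track the parameter shifts correctly when moving between levels $j$ and $j+1$, and to notice that the hypothesis $j<g$ in part (\romannumeral 2) is exactly what guarantees $g-j$ and $g-j-\tfrac12$ are nonzero, so that the stated ratios are well-defined.
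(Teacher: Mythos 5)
Your computation is correct, and it is exactly the intended argument: the paper gives no proof of this lemma, simply citing it as a straightforward generalization of \cite[Lemma 2.9]{HY}, and the direct verification via the two Pochhammer identities $\an[n+1]{x}=\an[n]{x}(x+n)$ and $\an[n]{x}/\an[n]{x+1}=x/(x+n)$ is precisely the bookkeeping the authors have in mind. Your closing remark about the role of the hypothesis $j<g$ (ensuring $g-j$ and $g-j-\tfrac12$ are nonzero so the stated ratios make sense over $\mathbb{Q}$) is a worthwhile detail that the paper leaves implicit.
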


The following lemma concerns the support of the truncation $\truncatedAL$.
We recall that we assume $g \le (p-1)/2$ (see Remark~\ref{rem:restriction_on_primes}).
\begin{Lem}\label{lem:support}
For any $(2g-1)$-tuple $(\nnn)$ which satisfies the condition \eqref{eq:truncation_condition}, $\widetilde{A}_{\bm{n}}^{(i,j)}$ is nonzero.
\end{Lem}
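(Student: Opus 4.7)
The plan is to convert $A^{(i,j)}_{\bm n}$, a priori a rational number with potentially messy denominators, into a product of central binomial coefficients and a power of two, and then apply Lucas' theorem to verify that every factor is a $p$-adic unit under the truncation bounds \eqref{eq:truncation_condition} and the standing hypothesis $g \le (p-1)/2$ (Remark~\ref{rem:restriction_on_primes}). This will simultaneously establish the parenthetical claim in Definition~\ref{def:Truncation} that the denominator of $A^{(i,j)}_{\bm n}$ is coprime to $p$ and the nonvanishing of $\widetilde A^{(i,j)}_{\bm n}$.

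Concretely, I would use the duplication identity $(1/2)_{n_k}/n_k! = \binom{2n_k}{n_k}/4^{n_k}$ together with the analogous simplification
\[
\frac{(g-j+1/2)_s}{(g-j+1)_s} \;=\; \frac{1}{4^s} \cdot \frac{\binom{2(g-j+s)}{g-j+s}}{\binom{2(g-j)}{g-j}}, \qquad s \coloneqq |\bm n|,
\]
to rewrite the coefficient as
\[
A^{(i,j)}_{\bm n} \;=\; \frac{1}{4^{2s}\,\binom{2(g-j)}{g-j}}\; \binom{2(g-j+s)}{g-j+s}\; \prod_{k=1}^{2g-1} \binom{2n_k}{n_k}.
\]
Since $p$ is odd, the prefactor $4^{-2s}$ is a $p$-adic unit, so the task reduces to showing each binomial factor is coprime to $p$. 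The easy ones are $\binom{2(g-j)}{g-j}$ and each $\binom{2n_k}{n_k}$: the hypotheses $g-j \le g-1 \le (p-3)/2$ and $n_k \le (p-1)/2$ make $2(g-j)$ and $2n_k$ single base-$p$ digits, so Lucas' theorem immediately gives nonvanishing mod $p$.

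The heart of the argument — and the place where the truncation window $d' - (p-1)/2 \le s \le d'$ plays an essential role — is the factor $\binom{2m}{m}$ with $m = g - j + s$, which is genuinely two-digit in base $p$. Here I would translate the bounds on $s$, using $d' = (p-1)(2g+1)/2 - ip + j$, into
\[
(g-i)p \;\le\; m \;\le\; (g-i)p + \tfrac{p-1}{2}.
\]
Writing $m = (g-i)p + r$ with $0 \le r \le (p-1)/2$, the base-$p$ expansion of $m$ has digits $(g-i,\, r)$, and both digits are at most $(p-1)/2$ (for $g-i$ because $i \ge 1$ and $g \le (p-1)/2$). Thus no carrying occurs in $m + m = 2m$, and Lucas' theorem gives
\[
\binom{2m}{m} \;\equiv\; \binom{2(g-i)}{g-i}\binom{2r}{r} \pmod p,
\]
with both right-hand factors being units by the one-digit argument above. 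Assembling these facts, $A^{(i,j)}_{\bm n}$ has $p$-adic valuation zero, so $\widetilde A^{(i,j)}_{\bm n} \in \mathbb F_p^{\times}$, as required. The only non-mechanical step is the last one: the bookkeeping confirming that the truncation window is designed precisely so that the top-digit of $m$ in base $p$ is the $p$-bounded quantity $g-i$ and no carry occurs in doubling.
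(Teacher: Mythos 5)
Your proof is correct, but it takes a genuinely different route from the paper's. The paper argues by induction on $\lvert\bm{n}\rvert$: it first reduces to the case where $\lvert\bm{n}\rvert$ is minimal in the truncation window, using the standard recurrence relation of Lemma~\ref{lem:recurrence_relations}({\romannumeral 1}) together with the observation that the ratio $(g-j+\tfrac{1}{2}+\lvert\bm{n}\rvert)/(g-j+1+\lvert\bm{n}\rvert)$ is a unit throughout the window, and then it handles the base case $\lvert\bm{n}\rvert=d'-(p-1)/2$ by an explicit double-factorial computation, counting the $p$-divisible factors in $(2(g-i)p-1)!!$ and $((g-i)p)!$ and checking that they cancel. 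You instead give a direct, non-inductive argument: the duplication identity turns $A^{(i,j)}_{\bm{n}}$ into a ratio of central binomial coefficients times a power of $2$, and Lucas' theorem, applied to $\binom{2m}{m}$ with $m=(g-i)p+r$ and $0\le r\le (p-1)/2$, shows every factor is a $p$-adic unit; the key point that the truncation window forces the top base-$p$ digit of $m$ to be $g-i$ with $g-i\le (p-1)/2$, so that no carry occurs in $m+m$, plays exactly the role of the paper's cancellation of $p$-divisible factors. Your identities and digit estimates all check out (in particular $g-i\le g-1\le (p-3)/2$ and $2n_k\le p-1$ under the standing hypothesis $g\le (p-1)/2$ of Remark~\ref{rem:restriction_on_primes}). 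What your approach buys is a uniform closed-form treatment of the whole support and a transparent explanation of why the truncation window is shaped as it is; what the paper's approach buys is reuse of the standard recurrence relation, which is needed anyway in the proof of Theorem~\ref{thm:PDEinPositiveChar}, and avoidance of any appeal to Lucas' theorem.
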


\begin{proof}
We note that it suffices to check the claim for any $(2g-1)$-tuple $\bm{n}$ such that $\lvert\bm{n}\rvert$ is minimal among all $(\nnn)\in\mathbb{N}_0^{2g-1}$ satisfying \eqref{eq:truncation_condition}.
In fact, we use the standard recurrence relation (Lemma~\ref{lem:recurrence_relations}({\romannumeral 1})) inductively on $\lvert\bm{n}\rvert$, and the fact that $(g-j+1/2+\lvert\bm{n}\rvert)/(g-j+1+\lvert\bm{n}\rvert)$ is nonzero for any $\bm{n}=(\nnn)\in\mathbb{N}_0^{2g-1}$ with $d'-\dfrac{p-1}{2} \le \lvert\bm{n}\rvert \le d'-1$.
To verify this fact, note that for such $\bm{n}$
\[
(g-i)p+\frac{1}{2} \le \lvert\bm{n}\rvert+a' \le (g-i)p+\frac{p}{2}-1,
\]
which implies that $\lvert\bm{n}\rvert+a'=g-j+\frac{1}{2}+\lvert\bm{n}\rvert$ is, over $\mathbb{F}_p$, one of those in $\{1/2,3/2,\ldots,(p-2)/2\}$ and $g-j+1+\lvert\bm{n}\rvert$ is one of those in $\{1,2,\ldots,(p-1)/2\}$.
Hence $(g-j+1/2+\lvert\bm{n}\rvert)/(g-j+1+\lvert\bm{n}\rvert))$ is nonzero.

Thus, we take any $\bm{n}$ with $\lvert\bm{n}\rvert$ minimal.
When $\bm{n}=0$, we have $\widetilde{A}_{0}^{(i,j)}=1$.
Suppose that $\bm{n}\ne0$.
In this case, by minimality we have $\lvert\bm{n}\rvert = d'-(p-1)/2$, and hence $\lvert\bm{n}\rvert+a' = (g-i)p+1/2$.
By \eqref{eq:Aij}, it suffices to show that $\an[\lvert\bm{n}\rvert]{a'} / \an[\lvert\bm{n}\rvert]{c'}$ is nonzero.
\begin{align*}
\frac{\an[\lvert\bm{n}\rvert]{a'}}{\an[\lvert\bm{n}\rvert]{c'}} &= \frac{(g-j+\frac{1}{2})(g-j+\frac{3}{2})\cdots((g-i)p-\frac{1}{2})}{(g-j+1)(g-j+2)\cdots(g-i)p}\\
&= \frac{(2(g-j)+1)(2(g-j)+3)\cdots(2(g-i)p-1)}{2^{\lvert\bm{n}\rvert} (g-j+1)(g-j+2)\cdots(g-i)p}\\
&= \frac{(2(g-i)p-1)!!}{((g-i)p)!} \cdot \frac{(g-j)!}{2^{\lvert\bm{n}\rvert} (2(g-j)-1)!!}.
\end{align*}
Since $g-j \le g-1 \le m-1 = (p-3)/2$, we deduce that $(g-j)!$ and $(2(g-j)-1)!!$ are nonzero in $\mathbb{F}_p$.
It remains to check that $(2(g-i)p-1)!!/((g-i)p)!$ is nonzero.
The subset of $p$-divisible numbers in $\{2(g-i)p-1,2(g-i)p-3,\ldots,1\}$ is $\{ (2(g-i)-1)p,(2(g-i)-3)p,\ldots,p \}$, whose size is  $g-i$.
The subset of $p$-divisible numbers in $\{(g-i)p,(g-i)p-1,\ldots,1\}$ is $\{ (g-i)p,(g-i-1)p,\ldots,p \}$, whose size is  $g-i$.
Therefore, in the expression of $(2(g-i)p-1)!!/((g-i)p)!$, the p-divisible numbers are canceled, and this value is nonzero in $\mathbb{F}_p$.
\end{proof}

The following relation between the entries of the Cartier--Manin matrices and the truncations $\truncatedAL$ of the Lauricella hypergeometric series is known.
This is a special case of the result \cite[Theorem 6.5]{OH} by Ohashi and Harashita.
\begin{Fact}\label{fact:transition}
Assume $p>2$.
Over $\mathbb{F}_p$, we have
\[
c_{ip-j} = \frac{\an[d']{c'}}{\an[d']{a'}} \truncatedAL
\]
for any $(i,j)$ with $1 \le i,j \le g$.
\end{Fact}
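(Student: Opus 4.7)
The plan is to compute $c_{ip-j}$ directly from its definition by expanding $f(x)^{m}$ with $m:=(p-1)/2$, to rewrite the resulting binomial coefficients as Pochhammer symbols modulo $p$, and finally to identify the outcome with the right-hand side by means of a Pochhammer identity peculiar to the numerical data $(a',c',d')$. The only nontrivial numerical input will be the congruence $a'+d'\equiv 0\pmod{p}$.

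First I would expand
\[
f(x)^{m} = x^{m}(x-1)^{m}\prod_{k=1}^{2g-1}(x - z_k)^{m}
\]
by the binomial theorem in each factor and read off the coefficient of $x^{ip-j}$. The exponent-matching condition forces $s = d' - |\bm n|$, and the constraints $0 \le s \le m$, $0 \le n_k \le m$ cut out exactly the set $\supp$ from \eqref{eq:supp}. After collecting signs,
\[
c_{ip-j} = (-1)^{d'}\sum_{\bm n \in \supp}\binom{m}{d'-|\bm n|}\prod_k \binom{m}{n_k}\,\bm{z}^{\bm n}.
\]
Applying the congruence $\binom{m}{n} \equiv (-1)^n \an[n]{1/2}/n! \pmod{p}$, which follows from $m \equiv -1/2 \pmod{p}$, all signs collapse and one obtains
\[
c_{ip-j} \equiv \sum_{\bm n \in \supp}\frac{\an[d'-|\bm n|]{1/2}}{(d'-|\bm n|)!}\prod_k \frac{\an[n_k]{1/2}}{n_k!}\,\bm{z}^{\bm n} \pmod{p}.
\]

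Comparing coefficient-by-coefficient with $\frac{\an[d']{c'}}{\an[d']{a'}}\truncatedAL$, using $b'=1/2$ and the telescoping $\an[d']{a'} = \an[|\bm n|]{a'}\an[r]{a' + |\bm n|}$ (similarly for $c'$) with $r := d' - |\bm n|$, the whole assertion reduces to the single Pochhammer identity
\[
\frac{\an[r]{1/2}}{r!} \equiv \frac{\an[r]{c' + |\bm n|}}{\an[r]{a' + |\bm n|}} \pmod{p}.
\]
A short calculation gives $a' + d' = p(2(g-i)+1)/2$, hence, setting $\beta := a' + |\bm n|$, we have $\beta \equiv -r \pmod{p}$ while $c' + |\bm n| = \beta + 1/2$. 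This yields
\[
\an[r]{\beta} \equiv \prod_{k=1}^{r}(-k) = (-1)^r r!,\qquad \an[r]{\beta + 1/2} \equiv \prod_{k=1}^{r}(1/2 - k) = (-1)^r \an[r]{1/2},
\]
whose ratio is the desired $\an[r]{1/2}/r!$. Since $r \le m$, both $\an[r]{\beta}$ and $\an[r]{\beta+1/2}$ are nonzero in $\mathbb{F}_p$, which in turn legitimizes the reduction of $\an[d']{c'}/\an[d']{a'}$ to $\mathbb{F}_p$ even when its numerator and denominator are individually $p$-divisible. The only real obstacle is to spot the reflection $\beta \equiv -r \pmod{p}$, which is latent in the definition of $d'$ but makes the hypergeometric description of the Cartier--Manin matrix work; once this is in hand, the rest is bookkeeping.
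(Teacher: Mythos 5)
Your proof is correct, but it is worth noting that the paper does not prove this statement at all: it is imported as a \emph{Fact}, a special case of Ohashi--Harashita \cite[Theorem 6.5]{OH}, so there is no internal argument to compare against. What you supply is a self-contained elementary verification, and the three pivots all check out: (1) the exponent bookkeeping $s+\lvert\bm{n}\rvert=d'$ in the expansion of $x^m(x-1)^m\prod_k(x-z_k)^m$ recovers exactly the support condition \eqref{eq:truncation_condition}, and the global sign $(-1)^{d'}\cdot(-1)^{d'-\lvert\bm{n}\rvert}\cdot(-1)^{\lvert\bm{n}\rvert}=1$ does collapse; (2) the congruence $\binom{m}{n}\equiv(-1)^n\an[n]{1/2}/n!$ is valid since $m\equiv-1/2$; (3) the computation $a'+d'=p(2(g-i)+1)/2$, hence $a'+\lvert\bm{n}\rvert\equiv-r$ with $r=d'-\lvert\bm{n}\rvert\le m$, gives $\an[r]{a'+\lvert\bm{n}\rvert}\equiv(-1)^r r!$ and $\an[r]{c'+\lvert\bm{n}\rvert}\equiv(-1)^r\an[r]{1/2}$ with both quantities $p$-adic units, which is the same reflection $a'+d'\equiv 0$ that the paper uses independently in Case 3 of the proof of Theorem~\ref{thm:PDEinPositiveChar}. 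The only place where your write-up is slightly informal is the closing remark about ``legitimizing'' the reduction of $\an[d']{c'}/\an[d']{a'}$: your telescoping shows the \emph{product} $\bigl(\an[d']{c'}/\an[d']{a'}\bigr)A^{(i,j)}_{\bm{n}}$ is a $p$-adic unit for each $\bm{n}\in\supp$, while the statement of the Fact implicitly reduces the constant and $\truncatedAL$ separately; that separate reduction is underwritten by Lemmas~\ref{lem:support} and~\ref{lem:simplification} of the paper, so nothing is missing, but a one-line appeal to those lemmas would make the last step airtight. The net gain of your route is that it makes the paper logically independent of \cite{OH} for this input, at the cost of redoing a computation the authors chose to outsource.
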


The following lemma provides a concrete description of the constant in Fact~\ref{fact:transition}.
\begin{Lem}\label{lem:simplification}
If $g\le (p-1)/2+1$ (cf. Remark~\ref{rem:restriction_on_primes}), then over $\mathbb{F}_p$, the constant $\an[d']{c'} / \an[d']{a'}$ is expressed as
\[
\frac{ \an[d']{c'} }{ \an[d']{a'} } = (-1)^\frac{p-1}{2}\frac{(2g-2i)!!}{(2g-2i-1)!!}\frac{(2g-2j-1)!!}{(2g-2j)!!},
\]
and this is nonzero.
\end{Lem}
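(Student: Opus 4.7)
The plan is a direct computation that turns the ratio of Pochhammer symbols into a ratio of explicit factorials and double factorials and then reduces modulo $p$ via Wilson's theorem. Write $m = (p-1)/2$, $N = (g-i)p + m$, and $M = 2(g-i)+1$. Substituting $a' = g-j+1/2$, $c' = g-j+1$, $d' = (g-i)p - g + m + j$, one computes $c' + d' - 1 = N$, so $\an[d']{c'} = N!/(g-j)!$. Multiplying $\an[d']{a'}$ by $2^{d'}$ converts the product of half-integers into consecutive odd integers running from $2(g-j)+1$ up to $Mp - 2$, giving $2^{d'}\an[d']{a'} = (Mp-2)!!/(2g-2j-1)!!$. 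Using $(2N-1)!! = (2N)!/(2^N N!)$ together with the coincidence $2N = Mp-1$ folds this into a single factorial and yields
\[
\frac{\an[d']{c'}}{\an[d']{a'}} \;=\; 2^{d'+N}\cdot\frac{(N!)^2\,(2g-2j-1)!!}{(g-j)!\,(Mp-1)!}.
\]

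Next I would pass to $\mathbb F_p$. The $p$-adic valuations balance perfectly: $v_p(N!) = g-i$, so $v_p((N!)^2) = 2(g-i) = M-1 = v_p((Mp-1)!)$. The standard block argument (the product of the $p-1$ integers in each interval $[lp+1,(l+1)p-1]$ is $\equiv -1 \pmod p$ by Wilson) extracts the $p$-free parts:
\[
\frac{N!}{p^{g-i}} \equiv (-1)^{g-i}(g-i)!\,m!, \qquad \frac{(Mp-1)!}{p^{M-1}} \equiv (-1)^{M}(M-1)! \;=\; -(2(g-i))! \pmod p,
\]
where the incomplete block $[(g-i)p+1,(g-i)p+m]$ contributes the tail $m!$.

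To finish, three classical reductions converge. Fermat's little theorem $2^{kp}\equiv 2^k$ collapses $2^{d'+N} = 2^{2(g-i)p + (p-1) - g + j}$ to $2^{g-2i+j}$; the identity $(m!)^2 \equiv (-1)^{m+1}\pmod p$ eliminates the surviving $m!$ factors; and $(2n)! = 2^n n!(2n-1)!!$ splits $(2(g-i))!$ as $2^{g-i}(g-i)!(2g-2i-1)!!$. After the signs and powers of $2$ telescope, the ratio becomes
\[
(-1)^{m}\, 2^{j-i}\,\frac{(g-i)!\,(2g-2j-1)!!}{(g-j)!\,(2g-2i-1)!!},
\]
and substituting $(2g-2i)!! = 2^{g-i}(g-i)!$ and $(2g-2j)!! = 2^{g-j}(g-j)!$ recognizes this as exactly the claimed formula. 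Nonzeroness is then immediate since under $g \le (p-1)/2+1$ every factor in the surviving expression has arguments at most $2g-3 \le p-2$, hence is invertible in $\mathbb F_p$. The main obstacle is purely bookkeeping: exponents depending separately on $g-i$ and $g-j$ appear in several places, and tracking the signs and powers of $2$ through the Wilson reduction is where mistakes are most likely.
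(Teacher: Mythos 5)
Your proof is correct, but it follows a genuinely different route from the paper's. The paper does not expand the Pochhammer symbols at all: it imports from \cite[Example 6.9]{OH} the identity $\an[d']{c'}/\an[d']{a'}=\frac{(p(2g-2i+1)-1)!!}{(p(2g-2i+1)-2)!!}\cdot\frac{(2g-2j-1)!!}{(2g-2j)!!}$ and then evaluates the first double-factorial ratio modulo $p$ by a purely combinatorial involution: the map $x\mapsto p(2g-2i+1)-x$ sends the even numbers below $p(2g-2i+1)$ bijectively to the odd ones, acts as negation mod $p$, and matches the $p$-divisible subsets (which contribute $p^{g-i}(2g-2i)!!$ and $p^{g-i}(2g-2i-1)!!$); counting the remaining elements mod $2$ gives the sign $(-1)^{(p-1)/2}$ directly. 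You instead compute both Pochhammer symbols from first principles, fold everything into single factorials via $(2N-1)!!=(2N)!/(2^NN!)$, and reduce with Wilson's theorem on blocks, $(m!)^2\equiv(-1)^{m+1}$, and Fermat for the powers of $2$ --- I checked the exponent bookkeeping ($2^{d'+N}\equiv 2^{g-2i+j}$, the valuation balance $2(g-i)=M-1$, and the final cancellation of all powers of $2$) and it is right. What your approach buys is self-containedness (no reliance on the cited identity) and an explicit intermediate closed form; what it costs is exactly the bookkeeping you flag, whereas in the paper the sign falls out of a single parity count. One small imprecision in your last step: the largest factor appearing is $2g-2i\le 2g-2\le p-1$ (in the even double factorial), not $2g-3\le p-2$; the hypothesis $g\le(p-1)/2+1$ still makes every factor prime to $p$, so the nonvanishing conclusion stands.
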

\begin{proof}
By \cite[Example 6.9]{OH}, we know that
\begin{equation}
\frac{ \an[d']{c'} }{ \an[d']{a'} }  = \frac{(p(2g-2i+1)-1)!!}{(p(2g-2i+1)-2)!!}\frac{(2g-2j-1)!!}{(2g-2j)!!}.
\end{equation}
Hence, it suffices to show
\begin{equation}\label{eq:simplification_to_be_shown}
\frac{(p(2g-2i+1)-1)!!}{(p(2g-2i+1)-2)!!} = (-1)^\frac{p-1}{2}\frac{(2g-2i)!!}{(2g-2i-1)!!}.
\end{equation}

Since $p(2g-2i+1)-1$ is even, $(p(2g-2i+1)-1)!!$ is the product of the elements of $A\coloneqq\{p(2g-2i+1)-1,p(2g-2i+1)-3,\ldots,2\}$, whose size is $(p(2g-2i+1)-1)/2 = p(g-i) + (p-1)/2$.
The subset of $p$-divisible numbers in this set is $A'\coloneqq\{p(2g-2i),p(2g-2i-2),\ldots,2p\}$, whose size is $g-i$.
Similarly, $(p(2g-2i+1)-2)!!$ is the product of the elements of $B\coloneqq\{p(2g-2i+1)-2,p(2g-2i+1)-4,\ldots,1\}$, whose size is $p(g-i) + (p-1)/2$, and the subset of the $p$-divisible numbers contained in $B$ is $B'\coloneqq\{p(2g-2i-1),p(2g-2i-3),\ldots,p\}$ and its size is $g-i$.

In addition, observe that over $\mathbb{F}_p$, multiplication by $-1$ maps $A$ to $B$, and hence $A \setminus A'$ to $B \setminus B'$.
Moreover, $\# (A \setminus A') = \# (B \setminus B') = p(g-i) + (p-1)/2 - (g-i) = (p-1)(g-i) + (p-1)/2 \equiv (p-1)/2 \pmod{2}$.
From these considerations, we obtain \eqref{eq:simplification_to_be_shown}:
\[
\frac{(p(2g-2i+1)-1)!!}{(p(2g-2i+1)-2)!!}
= (-1)^\frac{p-1}{2} \frac{p^{g-i} (2g-2i)!!}{p^{g-i} (2g-2i-1)!!}
= (-1)^\frac{p-1}{2}\frac{(2g-2i)!!}{(2g-2i-1)!!}.
\]

When $g\le (p-1)/2+1$, we have $2g-2i \le 2g-2 \le p-1$, and similarly $2g-2j \le p-1$.
These inequalities establish the claim of well-definedness and non-vanishing.
\end{proof}

\subsection{Relations among partial derivatives of $c_{ip-j}$}
Let $j\in\{1,\ldots,g\}$.
Let $\diffchar{\ell}{j}$ and $\diffsimp{\ell}{m}$ for $\ell,m\in\{1,\ldots,2g-1\}$ with $\ell< m$ be the  partial differential operators ${\mathcal D}_\ell$ and $\diffsimp{\ell}{m}$, respectively, in Theorem~\ref{thm:PDEinCharzero} associated with $a = (2g+1)/2 - j$, $(b_1,b_2,\ldots,b_{2g-1})=(1/2,1/2,\ldots,1/2)$, and $c = g-j+1$.
We have the following partial differential equations satisfied by $c_{ip-j}$.
They constitute a direct generalization of \cite[Theorem A]{HY}. 

\thmA*

\begin{proof}
To show the first equality, it suffices to show that $\diffchar{\ell}{j} \truncatedAL = 0$ by Fact~\ref{fact:transition}.
From the symmetry for $\ell,$ we prove only the case of $\ell=1$.

Let $\widetilde{A}_{\nnn}^{(i,j)}$ be the $\zzz$-coefficients of $\truncatedAL$. Then
\[
\widetilde{A}_{\bm{n}}^{(i,j)} = A_{\bm{n}}^{(i,j)}
=\frac{\an[\lvert\bm{n}\rvert]{a'} \an[n_1]{b'}\cdots\an[n_{2g-1}]{b'}}
{\an[\lvert\bm{n}\rvert]{c'} \an[n_1]{1}\cdots\an[n_{2g-1}]{1}}
\]
for $\bm{n}=(\nnn)\in\supp,$ and $\widetilde{A}_{\bm{n}}^{(i,j)}=0$ otherwise.
By Remark \ref{rem:StandardRelation}, we have
\[
\diffchar{1}{j} \truncatedAL
=\sum_{(n_1,\ldots,n_{2g-1})\in \mathbb{N}_0^{2g-1}} C_{\nnn}^{(i,j)} \zzz,
\]
where $B_{\bm{n}}^{(i,j)} \coloneqq (1+n_1)(c'+\lvert\bm{n}\rvert) \widetilde{A}_{\bm{n}+\bm{e}_1}^{(i,j)} -(b'+n_1)(a'+\lvert\bm{n}\rvert) \widetilde{A}_{\bm{n}}^{(i,j)}$ for $\bm{n}=(\nnn)\in\mathbb{N}_0^{2g-1}.$
We check that $B_{\bm{n}}^{(i,j)}$ vanishes for every $\bm{n}\in\mathbb{N}_0^{2g-1}$, dividing into five cases.

\underline{Case 1.} We consider the case where $\nnn$ satisfies
\[
n_1\le\frac{p-1}{2}-1,\,n_k\le\frac{p-1}{2}\,(2\le k \le 2g-1),\,d'-\frac{p-1}{2}\le \lvert\bm{n}\rvert \le d'-1.
\]
Since $n_1+1\le (p-1)/2$ and $d'-(p-1)/2+1\le (n_1+1)+n_2+\cdots+n_{2g-1}\le d',$
we have $\bm{n},\bm{n}+\bm{e}_1\in\supp,$ and by the standard recurrence relation, $B_{\bm{n}}^{(i,j)}=0.$

\underline{Case 2.} We consider the case where $n_1 = (p-1)/{2}$.
Since $n_1+1>(p-1)/2$ deduces $\widetilde{A}_{\bm{n}+\bm{e}_1}^{(i,j)}=0,$ and $b'+n_1=p/2=0,$ it holds $B_{\bm{n}}^{(i,j)}=0.$

\underline{Case 3.}  We consider the case where $\lvert\bm{n}\rvert = d'-(p-1)/{2}-1$. First, note that $\lvert\bm{n}\rvert < d'-(p-1)/2$ deduces $\widetilde{A}_{\bm{n}}^{(i,j)}=0.$ 
Note $a' + d' = 0$ in ${\mathbb F}_p$.
Since $c'+\lvert\bm{n}\rvert = (a'+1/2)+ ( d'-(p-1)/2-1 ) = a'+d' = 0,$ we have $B_{\bm{n}}^{(i,j)}=0.$

\underline{Case 4.}  We consider the case where $\lvert\bm{n}\rvert = d'$. Since $d' < (1+n_1)+n_2+\cdots+n_{2g-1}$ deduces  $\widetilde{A}_{\bm{n}+\bm{e}_1}^{(i,j)}=0$ and $a'+\lvert\bm{n}\rvert = a'+d' = 0,$ we have $B_{\bm{n}}^{(i,j)}=0.$

\underline{Case 5.} We consider the case where $\nnn$ satisfies $(p-1)/2 < n_k$ for some $k \in \{1,\ldots,2g-1\}$ or $ \lvert\bm{n}\rvert\le d'-(p-1)/2-2$ or $ d'+1 \le \lvert\bm{n}\rvert.$
In this case, neither $\bm{n}$ nor $\bm{n}+\bm{e}_1$ belongs to $\supp.$
Therefore, $\widetilde{A}_{\bm{n}}^{(i,j)}=\widetilde{A}_{\bm{n}+\bm{e}_1}^{(i,j)}=0,$ and we have $B_{\bm{n}}^{(i,j)}=0.$

To show the second equality, we only need to show $\diffsimp{\ell}{m}\truncatedAL=0$, since $c_{ip-j} = (-1)^\frac{p-1}{2}\cdot\dfrac{j}{i} \truncatedAL$.
From the symmetry for $(\ell, m)$, it suffices to consider the case $(\ell,m)=(1,2)$. 
The $\zzz$-coefficient of
\begin{align*}
&\diffsimp{1}{2}\truncatedAL\\
=&\sum_{(n_1,\ldots,n_{2g-1})\in \mathbb{N}_0^{2g-1}} \Bigl( n_1 n_2(z^{n_1}_1 z^{n_2-1}_2 z^{n_3}_3\cdots z^{n_{2g-1}}_{2g-1}-z^{n_1-1}_1 z^{n_2}_2\cdots z^{n_{2g-1}}_{2g-1})\\
&\hspace{4em} +\frac{1}{2}n_2z^{n_1}_1 z^{n_2-1}_2 z^{n_3}_3\cdots z^{n_{2g-1}}_{2g-1}-\frac{1}{2}n_1z^{n_1-1}_1 z^{n_2}_2\cdots z^{n_{2g-1}}_{2g-1}\Bigr) \widetilde{A}_{\nnn}^{(i,j)}\\
=&\sum_{\bm{n}\in\supp} C_{\bm{n}}^{(i,j)}\bm{z}^{\bm{n}},
\end{align*}
where $C_{\bm{n}}^{(i,j)} \coloneqq \left(n_1+1/2\right) \left(n_2+1\right)\widetilde{A}_{\bm{n}+\bm{e}_2}^{(i,j)}-\left(n_1+1\right)\left(n_2+1/2\right)\widetilde{A}_{\bm{n}+\bm{e}_1}^{(i,j)}$ for $\bm{n}=(\nnn)\in\mathbb{N}_0^{2g-1}.$
We check that $C_{\bm{n}}^{(i,j)}$ vanishes for every $\bm{n}\in\mathbb{N}_0^{2g-1}$, dividing into three cases.

\underline{Case 1.} We consider the case where $\nnn$ satisfies
\[
n_1\le\frac{p-1}{2}-1,\,n_2\le\frac{p-1}{2}-1,\,n_k\le\frac{p-1}{2}\,(3\le k \le 2g-1),\,d'-\frac{p-1}{2}-1\le \lvert\bm{n}\rvert \le d'-1.
\]
In this case, we have $\bm{n}+\bm{e}_1,\bm{n}+\bm{e}_2\in\supp,$ and by the standard recurrence relations in Lemma \ref{lem:recurrence_relations}({\romannumeral 1}) for $n_2$ and $n_1$, $C_{\bm{n}}^{(i,j)}=0.$

\underline{Case 2.} We consider the case where $n_1 = (p-1)/{2}$ or $n_2 = (p-1)/{2}$.
When $n_1 = (p-1)/{2}$, since $n_1+1/2=0$ and $\widetilde{A}_{\bm{n}+\bm{e}_1}^{(i,j)}=0$ by $\bm{n}+\bm{e}_1 \notin \supp$, $C_{\bm{n}}^{(i,j)}=0.$
When $n_2 = (p-1)/{2}$, we employ the same argument.

\underline{Case 3.} We consider the case where $\nnn$ satisfies $(p-1)/2 < n_k$ for some $k \in \{1,\ldots,2g-1\}$ or $ \lvert\bm{n}\rvert\le d'-(p-1)/2-2$ or $ d' \le \lvert\bm{n}\rvert.$
In this case, neither $\bm{n}+\bm{e}_1$ nor $\bm{n}+\bm{e}_2$ belongs to $\supp.$
Thus, $\widetilde{A}_{\bm{n}+\bm{e}_1}^{(i,j)}=\widetilde{A}_{\bm{n}+\bm{e}_2}^{(i,j)}=0,$ and we have $C_{\bm{n}}^{(i,j)}=0.$
\end{proof}

Next, we examine fundamental relations involving $c_{ip-j}$ under partial differential operators.
These yield the equalities (called contiguity relations) in \cite[Theorem 5.3]{HY} in the genus-two case.
\begin{Thm}\label{thm:Contiguity_Relations}
Set $m\coloneqq(p-1)/2$.
\begin{enumerate}
\item[({\romannumeral 1})]
For $k \in \{1,\ldots,2g-1\}, j<g,$ and for any $i$, we have
\[
z_k\partial_k c_{ip-j} -m c_{ip-j} = \partial_k\ c_{ip-(j+1)}.
\]

\item[({\romannumeral 2})]
For $j<g$ and any $i$, we have
\[
\left(\sum_{k=1}^{2g-1} (z_k-1)\partial_k\right) c_{ip-(j+1)} = (m-j+1) c_{ip-j} + (m-g+j+1) c_{ip-(j+1)}.
\]
\end{enumerate}
\end{Thm}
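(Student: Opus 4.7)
My plan is to pass to the Lauricella side via Fact~\ref{fact:transition}, writing $c_{ip-j} = \alpha_j \truncatedAL[i][j]$ with $\alpha_j \coloneqq \an[d']{c'}/\an[d']{a'}$, and to prove both identities by comparing $\bm{z}^{\bm{n}}$-coefficients on each side. Two arithmetic facts will drive the bookkeeping: the congruence $m = (p-1)/2 \equiv -1/2 \pmod p$ (in particular $1/2 + m \equiv 0$), and the ratio $\alpha_j/\alpha_{j+1} = (g-j-1/2)/(g-j)$, which follows from Lemma~\ref{lem:simplification} (recalling that the parameter $d'$ at level $j+1$ is $d'+1$). These interact with the two recurrences of Lemma~\ref{lem:recurrence_relations}: the standard recurrence at a fixed level, and the interchanging relations~\eqref{eq:ALinterchanging} and~\eqref{eq:ALinterchanging2} between levels $j$ and $j+1$.

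For part ({\romannumeral 1}), the $\bm{z}^{\bm{n}}$-coefficient of $z_k\partial_k c_{ip-j} - m c_{ip-j}$ equals $(n_k - m)\alpha_j \widetilde{A}_{\bm{n}}^{(i,j)}$, and that of $\partial_k c_{ip-(j+1)}$ equals $(n_k+1)\alpha_{j+1} \widetilde{A}_{\bm{n}+\bm{e}_k}^{(i,j+1)}$. Applying~\eqref{eq:ALinterchanging2} together with the ratio $\alpha_j/\alpha_{j+1}$ rewrites the RHS coefficient as $\alpha_j(n_k+1/2)\widetilde{A}_{\bm{n}}^{(i,j)}$, which equals the LHS once $n_k + 1/2 \equiv n_k - m \pmod p$ is invoked. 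The only boundary case is $\bm{n}+\bm{e}_k \notin \supp[i][j+1]$ (forced by $n_k = m$): this is automatic, since the LHS factor $n_k - m$ then vanishes. The opposite failure never occurs, because the support conditions at level $j+1$ for $\bm{n}+\bm{e}_k$ already imply those of level $j$ for $\bm{n}$.

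For part ({\romannumeral 2}), the LHS $\bm{z}^{\bm{n}}$-coefficient is
\[
\alpha_{j+1}\Bigl(|\bm{n}|\, \widetilde{A}_{\bm{n}}^{(i,j+1)} - \sum_{k=1}^{2g-1} (n_k+1)\, \widetilde{A}_{\bm{n}+\bm{e}_k}^{(i,j+1)}\Bigr).
\]
I would evaluate the sum via the standard recurrence at level $j+1$, using $\sum_k(1/2+n_k) = (2g-1)/2 + |\bm{n}|$; contributions from indices with $n_k = m$ drop out via $1/2+m = 0$, so the closed form is uniform across indices. Converting $\widetilde{A}_{\bm{n}}^{(i,j)}$ into $\widetilde{A}_{\bm{n}}^{(i,j+1)}$ through~\eqref{eq:ALinterchanging} and absorbing $\alpha_j/\alpha_{j+1}$ collapses the claim, in the generic case, to the scalar identity
\[
N(g-j+N) - (g-j-1/2+N)\bigl((2g-1)/2+N\bigr) = (m-j+1)(g-j-1/2+N) + (m-g+j+1)(g-j+N)
\]
in $\mathbb{F}_p[N]$ where $N = |\bm{n}|$; substituting $m = -1/2$, both sides simplify to $(1-g)N - (g-j-1/2)(g-1/2)$.

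The main obstacle lies in the two boundary regimes where $\supp[i][j]$ and $\supp[i][j+1]$ disagree. When $\bm{n}\in \supp[i][j]\setminus \supp[i][j+1]$ one has $|\bm{n}| = d'-m \equiv j-g \pmod p$, so $\widetilde{A}_{\bm{n}}^{(i,j+1)}=0$ kills the second RHS term and the first LHS term, while the surviving LHS sum must still be evaluated through~\eqref{eq:ALinterchanging2} rather than~\eqref{eq:ALinterchanging}; the required matching reduces to $(2g-1)/2 + (j-g) \equiv -(m-j+1) \pmod p$. Symmetrically, when $\bm{n}\in \supp[i][j+1]\setminus \supp[i][j]$ one has $|\bm{n}| = d'+1 \equiv j-g+1/2 \pmod p$, all $\bm{n}+\bm{e}_k$ leave $\supp[i][j+1]$, and the only nontrivial matching is $d'+1 \equiv m-g+j+1 \pmod p$. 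Both verifications are short mod-$p$ checks; the real work is organising the case analysis, tracking which truncated coefficient vanishes in each regime and correctly selecting between~\eqref{eq:ALinterchanging} and~\eqref{eq:ALinterchanging2} (the latter remaining valid between truncations while the former fails precisely in the first boundary regime).
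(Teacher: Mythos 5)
Your proposal is correct and follows essentially the same route as the paper: pass to the truncated Lauricella series via Fact~\ref{fact:transition}, compare $\bm{z}^{\bm{n}}$-coefficients using the recurrences of Lemma~\ref{lem:recurrence_relations}, and dispose of the boundary cases where $\supp$ and $\supp[i][j+1]$ disagree by the vanishing of $1/2+m$, $g-j+\lvert\bm{n}\rvert$, or $g-j-1/2+\lvert\bm{n}\rvert$ as appropriate. The only difference is organizational: in part ({\romannumeral 2}) you reduce to a single scalar identity at level $j+1$ via the standard recurrence, whereas the paper groups the terms into two blocks converted to level $j$ by \eqref{eq:ALinterchanging} and \eqref{eq:ALinterchanging2} and cancels them against each other.
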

\begin{proof}
For a subset $S$ of $\mathbb{N}_0^{2g-1}$ we denote the set $\{\bm{n}+\bm{s} | \bm{s}\in S\}$ by $\bm{n}+S$,
which will be used for $S=\supp$, see \eqref{eq:supp} for the definition of $\supp$.

Note that
\[
\widetilde{A}_{\bm{n}}^{(i,j)} = A_{\bm{n}}^{(i,j)}
=\frac{\an[\lvert\bm{n}\rvert]{a'} \an[n_1]{b'}\cdots\an[n_{2g-1}]{b'}}
{\an[\lvert\bm{n}\rvert]{c'} \an[n_1]{1}\cdots\an[n_{2g-1}]{1}}
\]
for $\bm{n}=(n_1,\ldots,n_{2g-1})\in\supp,$ while $\widetilde{A}_{\bm{n}}^{(i,j)}=0$ otherwise.

({\romannumeral 1}) Let $\varphi \in {\mathbb{F}_p}[z_1,\ldots,z_{2g-1}]$ be
\[
\frac{\an[d']{a'}}{\an[d']{c'}} \left(z_k\partial_k c_{ip-j} -m c_{ip-j} - \partial_k\ c_{ip-(j+1)}\right).
\]
Since $\an[d']{a'}/ \an[d']{c'}$ is nonzero according to Lemma~\ref{lem:simplification}, it suffices to show that $\varphi$ is zero.
We transition to the Lauricella series via the link in Fact \ref{fact:transition}. $\varphi$ is
\begin{align}
& \sum_{\bm{n}\in\supp} n_k\widetilde{A}_{\bm{n}}^{(i,j)}\bm{z}^{\bm{n}}
-m \sum_{\bm{n}\in\supp} \widetilde{A}_{\bm{n}}^{(i,j)}\bm{z}^{\bm{n}} - \frac{c'-1}{a'-1} \sum_{\bm{n}\in\supp[i][j+1]} n_k\widetilde{A}_{\bm{n}}^{(i,j+1)}\bm{z}^{\bm{n}-\bm{e}_k}\notag\\
=& \sum_{\bm{n}\in\supp} \left(\frac{1}{2}+n_k\right)\widetilde{A}_{\bm{n}}^{(i,j)}\bm{z}^{\bm{n}} - \frac{g-j}{g-j-\frac{1}{2}} \sum_{\bm{n}\in\supp[i][j+1]} n_k\widetilde{A}_{\bm{n}}^{(i,j+1)}\bm{z}^{\bm{n}-\bm{e}_k}\notag\\
=& \sum_{\bm{n}\in\supp} \left(\frac{1}{2}+n_k\right)\widetilde{A}_{\bm{n}}^{(i,j)}\bm{z}^{\bm{n}} - \frac{g-j}{g-j-\frac{1}{2}} \sum_{\bm{n}\in\mathsf{T}^{(i,j+1)}_k} n_k\widetilde{A}_{\bm{n}}^{(i,j+1)}\bm{z}^{\bm{n}-\bm{e}_k},\label{eq:simple_contiguity_to_be_shown}
\end{align}
where $\mathsf{T}^{(i,j+1)}_k \coloneqq \supp[i][j+1] \setminus \setsymbolin{\bm{n}}{\supp[i][j+1]}{n_k=0}.$
By Lemma~\ref{lem:recurrence_relations}~({\romannumeral 2})\eqref{eq:ALinterchanging2}, we write \eqref{eq:simple_contiguity_to_be_shown} as
\begin{align}
&\sum_{\bm{n}\in\supp} \left(\frac{1}{2}+n_k\right)\widetilde{A}_{\bm{n}}^{(i,j)}\bm{z}^{\bm{n}} - \frac{g-j}{g-j-\frac{1}{2}} \sum_{\bm{n}\in-\bm{e}_k+\mathsf{T}^{(i,j+1)}_k} (n_k+1)\widetilde{A}_{\bm{n}+\bm{e}_k}^{(i,j+1)}\bm{z}^{\bm{n}}\notag\\
=& \sum_{\bm{n}\in\supp} \left(\frac{1}{2}+n_k\right)\widetilde{A}_{\bm{n}}^{(i,j)}\bm{z}^{\bm{n}} -  \sum_{\bm{n}\in-\bm{e}_k+\mathsf{T}^{(i,j+1)}_k} \left(\frac{1}{2}+n_k\right)\widetilde{A}_{\bm{n}}^{(i,j)}\bm{z}^{\bm{n}}.\label{eq:simple_contiguity_to_be_shown_2}
\end{align}
Now we note that, by setting $d'\coloneqq(p-1)(2g+1)/2-ip+j$,
\begin{align*}
\supp &= \setsymbolin{\bm{n}}{\mathbb{N}_0^{2g-1}}{\begin{array}{l}
    n_\ell \le \dfrac{p-1}{2} \quad (\ell=1,\ldots,2g-1),\\
    d'-\dfrac{p-1}{2} \le \lvert\bm{n}\rvert \le d'
\end{array}},\\
\supp[i][j+1] &= \setsymbolin{\bm{n}}{\mathbb{N}_0^{2g-1}}{\begin{array}{l}
    n_\ell \le \dfrac{p-1}{2} \quad (\ell=1,\ldots,2g-1),\\
    d'-\dfrac{p-1}{2}+1 \le \lvert\bm{n}\rvert \le d'+1
\end{array}}.
\end{align*}
Thus, $\left(-\bm{e}_k+\mathsf{T}^{(i,j+1)}_k\right) \setminus \supp = \emptyset$ and
\[
\supp \setminus \left(-\bm{e}_k+\mathsf{T}^{(i,j+1)}_k\right) = \setsymbolin{\bm{n}}{\supp}{n_k = (p-1)/2}.
\]
Therefore, \eqref{eq:simple_contiguity_to_be_shown_2} vanishes.

({\romannumeral 2}) Let $\psi \in {\mathbb{F}_p}[z_1,\ldots,z_{2g-1}]$ be
\[
\frac{\an[d']{a'}}{\an[d']{c'}} \left(\left(\sum_{k=1}^{2g-1} (z_k-1)\partial_k\right) c_{ip-(j+1)} - (m-j+1) c_{ip-j} - (-mg+j+1) c_{ip-(j+1)}\right),
\]
and we show that $\psi$ is zero.
Transitioning to the Lauricella series by Fact \ref{fact:transition}, $\psi$ is
\begin{align}
& \frac{c'-1}{a'-1} \sum_{k=1}^{2g-1} \sum_{\bm{n}\in\supp[i][j+1]} \left( n_k\widetilde{A}_{\bm{n}}^{(i,j+1)}\bm{z}^{\bm{n}} -n_k\widetilde{A}_{\bm{n}}^{(i,j+1)}\bm{z}^{\bm{n}-\bm{e}_k} \right)\notag\\
&+\left( j-\frac{1}{2} \right) \sum_{\bm{n}\in\supp} \widetilde{A}_{\bm{n}}^{(i,j)}\bm{z}^{\bm{n}} +\frac{c'-1}{a'-1} \left( g-j-\frac{1}{2} \right) \sum_{\bm{n}\in\supp[i][j+1]} \widetilde{A}_{\bm{n}}^{(i,j+1)}\bm{z}^{\bm{n}}\notag\\
=& \frac{c'-1}{a'-1} \sum_{k=1}^{2g-1} \sum_{\bm{n}\in\supp[i][j+1]}  n_k\widetilde{A}_{\bm{n}}^{(i,j+1)}\bm{z}^{\bm{n}} -\sum_{k=1}^{2g-1} \sum_{\bm{n}\in-\bm{e}_k+\mathsf{T}^{(i,j+1)}_k} \frac{c'-1}{a'-1} (n_k+1)\widetilde{A}_{\bm{n}+\bm{e}_k}^{(i,j+1)}\bm{z}^{\bm{n}}\notag\\
&+\left( j-\frac{1}{2} \right) \sum_{\bm{n}\in\supp} \widetilde{A}_{\bm{n}}^{(i,j)}\bm{z}^{\bm{n}} +\frac{c'-1}{a'-1} \left( g-j-\frac{1}{2} \right) \sum_{\bm{n}\in\supp[i][j+1]} \widetilde{A}_{\bm{n}}^{(i,j+1)}\bm{z}^{\bm{n}}\label{eq:second_contiguity_to_be_shown}
\end{align}
We calculate some parts of \eqref{eq:second_contiguity_to_be_shown} separately.
Using Lemma~\ref{lem:recurrence_relations}~({\romannumeral 2})\eqref{eq:ALinterchanging2}, we have
\begin{align}
& -\sum_{k=1}^{2g-1} \sum_{\bm{n}\in-\bm{e}_k+\mathsf{T}^{(i,j+1)}_k} \frac{c'-1}{a'-1} (n_k+1)\widetilde{A}_{\bm{n}+\bm{e}_k}^{(i,j+1)}\bm{z}^{\bm{n}} +\left( j-\frac{1}{2} \right) \sum_{\bm{n}\in\supp} \widetilde{A}_{\bm{n}}^{(i,j)}\bm{z}^{\bm{n}}\notag\\
=& -\sum_{k=1}^{2g-1} \sum_{\bm{n}\in-\bm{e}_k+\mathsf{T}^{(i,j+1)}_k} \left(\frac{1}{2}+n_k\right)\widetilde{A}_{\bm{n}}^{(i,j)}\bm{z}^{\bm{n}} +\left( j-\frac{1}{2} \right) \sum_{\bm{n}\in\supp} \widetilde{A}_{\bm{n}}^{(i,j)}\bm{z}^{\bm{n}}.\label{eq:second_contiguity_computation_1}
\end{align}
Here, we replace $-\bm{e}_k+\mathsf{T}^{(i,j+1)}_k$ with $\supp$ for the same reason mentioned in the last part of  ({\romannumeral 1}).
\eqref{eq:second_contiguity_computation_1} is
\begin{align}
& -\sum_{k=1}^{2g-1} \sum_{\bm{n}\in\supp} \left(\frac{1}{2}+n_k\right)\widetilde{A}_{\bm{n}}^{(i,j)}\bm{z}^{\bm{n}} +\left( j-\frac{1}{2} \right) \sum_{\bm{n}\in\supp} \widetilde{A}_{\bm{n}}^{(i,j)}\bm{z}^{\bm{n}}\notag\\
=&  -\sum_{\bm{n}\in\supp} \left(\frac{1}{2}(2g-1)+\lvert\bm{n}\rvert\right)\widetilde{A}_{\bm{n}}^{(i,j)}\bm{z}^{\bm{n}} +\left( j-\frac{1}{2} \right) \sum_{\bm{n}\in\supp} \widetilde{A}_{\bm{n}}^{(i,j)}\bm{z}^{\bm{n}}\notag\\
=&  -\sum_{\bm{n}\in\supp} \left(g-j+\lvert\bm{n}\rvert\right)\widetilde{A}_{\bm{n}}^{(i,j)}\bm{z}^{\bm{n}}.\label{eq:second_contiguity_computation_2}
\end{align}
Next, for the other part of \eqref{eq:second_contiguity_to_be_shown} we have
\begin{align}
& \frac{c'-1}{a'-1} \sum_{k=1}^{2g-1} \sum_{\bm{n}\in\supp[i][j+1]}  n_k\widetilde{A}_{\bm{n}}^{(i,j+1)}\bm{z}^{\bm{n}} +\frac{c'-1}{a'-1} \left( g-j-\frac{1}{2} \right) \sum_{\bm{n}\in\supp[i][j+1]} \widetilde{A}_{\bm{n}}^{(i,j+1)}\bm{z}^{\bm{n}}\notag\\
=& \frac{c'-1}{a'-1} \sum_{\bm{n}\in\supp[i][j+1]}  \lvert\bm{n}\rvert\widetilde{A}_{\bm{n}}^{(i,j+1)}\bm{z}^{\bm{n}} +\frac{c'-1}{a'-1} \left( g-j-\frac{1}{2} \right) \sum_{\bm{n}\in\supp[i][j+1]} \widetilde{A}_{\bm{n}}^{(i,j+1)}\bm{z}^{\bm{n}}\notag\\
=& \frac{c'-1}{a'-1} \sum_{\bm{n}\in\supp[i][j+1]} \left( g-j-\frac{1}{2}+\lvert\bm{n}\rvert \right)\widetilde{A}_{\bm{n}}^{(i,j+1)}\bm{z}^{\bm{n}}\notag\\
=& \sum_{\bm{n}\in\mathsf{U}^{(i,j+1)}} \left(g-j+\lvert\bm{n}\rvert\right) \widetilde{A}_{\bm{n}}^{(i,j)}\bm{z}^{\bm{n}}.\label{eq:second_contiguity_computation_3}
\end{align}
In the last line above, we used Lemma~\ref{lem:recurrence_relations}({\romannumeral 2})\eqref{eq:ALinterchanging} and defined
\begin{align*}
\mathsf{U}^{(i,j+1)} &\coloneqq \supp \cap \supp[i][j+1] \\
 &= \supp[i][j+1] \setminus \left( \supp[i][j+1]\setminus\supp \right)\\
&= \supp[i][j+1] \setminus \setsymbolin{\bm{n}}{\mathbb{N}_0^{2g-1}}{\begin{array}{l}
    n_\ell \le \dfrac{p-1}{2} \quad (\ell=1,\ldots,2g-1),\\
    \lvert\bm{n}\rvert = d'+1 = -g+j+\dfrac{1}{2}
\end{array}},
\end{align*}
which corresponds to almost all $\supp[i][j+1]$ excluding those $\bm{n}$ with maximal degree, for which $g-j-1/2+\lvert\bm{n}\rvert$ is zero.
Note
\[
\supp\setminus\supp[i][j+1]
= \setsymbolin{\bm{n}}{\mathbb{N}_0^{2g-1}}{\begin{array}{l}
    n_\ell \le \dfrac{p-1}{2} \quad (\ell=1,\ldots,2g-1),\\
    \lvert\bm{n}\rvert = d'-\dfrac{p-1}{2} = (g-i)p-g+j
\end{array}},
\]
which implies that the portion of $\left( \supp\setminus\supp[i][j+1] \right)$ in \eqref{eq:second_contiguity_computation_2} vanishes, and hence the sum of \eqref{eq:second_contiguity_computation_2} and \eqref{eq:second_contiguity_computation_3} also vanishes.
\end{proof}

\begin{Cor}\label{cor:Relations}
Set $m\coloneqq(p-1)/2.$
For $j<g$ and any $i$, we have
\[
\left(\sum_{k=1}^{2g-1} z_k (z_k-1)\partial_k\right) c_{ip-j}
= \left( m\sum_{k=1}^{2g-1}z_k +g -j \right) c_{ip-j} + (m-g+j+1) c_{ip-(j+1)},
\]
and for $r,s$ with $2 \le r \le s$ we have
\begin{align*}
&\left(\sum_{k=1}^{2g-1} z_k^r (z_k-1)\partial_k\right) c_{ip-(s+1-r)}\\
=& m\sum_{\ell=1}^{r-1}\sum_{k=1}^{2g-1} z_k^{r-\ell} (z_k-1)c_{ip-(s+\ell-r)} + \left( m\sum_{k=1}^{2g-1}z_k +g -s \right) c_{ip-s}\\
&+(m-g+s+1) c_{ip-(s+1)}.
\end{align*}
\end{Cor}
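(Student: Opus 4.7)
The plan is to derive both identities from Theorem~\ref{thm:Contiguity_Relations}: Part~1 by direct substitution, and Part~2 by induction on $r$, using Part~1 as the base case.

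For Part~1, I would start with identity~({\romannumeral 2}) of Theorem~\ref{thm:Contiguity_Relations}, namely
\[
\sum_{k=1}^{2g-1}(z_k - 1)\partial_k c_{ip-(j+1)} = (m - j + 1) c_{ip-j} + (m - g + j + 1) c_{ip-(j+1)},
\]
and substitute $\partial_k c_{ip-(j+1)} = z_k \partial_k c_{ip-j} - m c_{ip-j}$ coming from identity~({\romannumeral 1}). This rewrites the left-hand side as $\sum_k z_k(z_k-1)\partial_k c_{ip-j} - m\sum_k (z_k-1) c_{ip-j}$. Solving for $\sum_k z_k(z_k-1)\partial_k c_{ip-j}$, the scalar coefficient of $c_{ip-j}$ becomes $m\sum_k z_k - m(2g-1) + m - j + 1 = m\sum_k z_k - 2mg + 2m - j + 1$. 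The crucial characteristic-$p$ simplification $2m = p-1 \equiv -1$ in $\mathbb{F}_p$ collapses the constant part to $g - j$, matching the claim.

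For Part~2, I would induct on $r \ge 2$. The base case $r=2$ is obtained by applying Part~1 with $j = s$, then replacing $\partial_k c_{ip-s}$ via identity~({\romannumeral 1}) with $j = s-1$: the substitution produces exactly one extra term, $m\sum_k z_k(z_k-1) c_{ip-(s-1)}$, which is the $\ell = 1$ summand of the target formula. For the inductive step from $r$ to $r+1$, apply identity~({\romannumeral 1}) with $j = s - r$ to replace $\partial_k c_{ip-(s+1-r)}$ by $z_k \partial_k c_{ip-(s-r)} - m c_{ip-(s-r)}$; after multiplying by $z_k^r(z_k - 1)$ and summing, this turns $\sum_k z_k^r(z_k-1) \partial_k c_{ip-(s+1-r)}$ into $\sum_k z_k^{r+1}(z_k-1)\partial_k c_{ip-(s-r)} - m\sum_k z_k^r(z_k-1) c_{ip-(s-r)}$. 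Substituting this into the $r$-th identity and reindexing the inner sum $\sum_{\ell=1}^{r-1}$ by $\ell \mapsto \ell + 1$, while incorporating the newly produced term as the $\ell = 1$ contribution, assembles the full sum $\sum_{\ell=1}^{r}$ appearing in the target formula for $r+1$.

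The calculations are elementary. The main obstacle is the combinatorial bookkeeping in Part~2: verifying that the shifted index $j = s - r$ always stays in the permitted range $j < g$ (which follows from $r \le s < g$, since $c_{ip-(s+1)}$ must make sense), and that the shift $\ell \mapsto \ell + 1$ together with the freshly produced $\ell = 1$ term reconstructs exactly the stated right-hand side without any stray coefficients.
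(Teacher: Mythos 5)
Your proposal is correct and follows essentially the same route as the paper: the first identity is obtained by substituting relation~({\romannumeral 1}) into relation~({\romannumeral 2}) of Theorem~\ref{thm:Contiguity_Relations} and simplifying the constant via $2m\equiv -1 \pmod p$, and the second is proved by induction on $r$, repeatedly applying relation~({\romannumeral 1}) to shift the index (the paper only writes out the $r=2$ step and asserts the induction). Your bookkeeping of the index ranges and the reindexing $\ell\mapsto\ell+1$ checks out.
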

\begin{proof}
From the equalities ({\romannumeral 1}) and ({\romannumeral 2}) in Theorem~\ref{thm:Contiguity_Relations}, we have
\begin{align*}
& \left(\sum_{k=1}^{2g-1} z_k (z_k-1)\partial_k\right) c_{ip-j}\\
=& \left( m\sum_{k=1}^{2g-1}(z_k-1) + m-j +1 \right) c_{ip-j} + (m-g+j+1) c_{ip-(j+1)},
\end{align*}
Since $-m(2g-1)+m-j+1 = g-j$, the first assertion follows.

For $r=2$, applying Theorem~\ref{thm:Contiguity_Relations}({\romannumeral 1}) to the equality of the first assertion, we have
\begin{align*}
& \left(\sum_{k=1}^{2g-1} z_k^2 (z_k-1)\partial_k\right) c_{ip-(s-1)}\\
=& m\sum_{k=1}^{2g-1}z_k(z_k-1)c_{ip-(s-1)} +\left( m\sum_{k=1}^{2g-1}z_k +g -s \right) c_{ip-s} + (m-g+s+1) c_{ip-(s+1)}.
\end{align*}
The corollary follows inductively on $r$.
\end{proof}

\section{Proof of the multiplicity-one theorem}
We clarify what it means for an ideal to have multiplicity-one in an Artinian ring, and we prove the multiplicity-one theorem by means of the equalities involving partial derivatives of $c_{ip-j}$ established in Subsection 3.2.

\subsection{The Jacobian criterion}
Let $k$ be an algebraically closed field.
Let $R=k[x_1,\ldots,x_n]$.
Let $I$ be an ideal of $R$ such that $\quotientset[I]{R}$ is an Artinian ring (i.e., $I$ is zero-dimensional).
\begin{Def}
We say $I$ is of {\it multiplicity-one} if  $\dim_k (\quotientset[I]{R})_\mathfrak{P} = 1$ for any $\mathfrak{P}$ in $\mathrm{Spec}(\quotientset[I]{R})$.
\end{Def}
The following lemma is well-known.
See for example \cite[Lemma 5.2]{HY} for the proof.
\begin{Lem}\label{lem:JacobianCriterion}
$I=(f_1,\ldots,f_m)$ is muplicplicity one if and only if for any $\mathfrak{P}$ in $\mathrm{Spec}(\quotientset[I]{R})$ the Jacobian matrix $\left(\partial f_j/\partial x_i\right)$ at $\mathfrak{P}$ is of rank $n$.
\end{Lem}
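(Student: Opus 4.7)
The plan is to reduce the claim to a pointwise statement about the local ring at each maximal ideal and then recognize the Jacobian rank condition as a reformulation, via Nakayama's lemma, of the condition that the $f_j$ generate that local maximal ideal.

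First I would use the Nullstellensatz (since $k$ is algebraically closed and $R/I$ is Artinian, hence zero-dimensional) to identify the points of $\mathrm{Spec}(R/I)$ with the finite set $V(I)\subseteq k^n$. Fix one such point $P=(a_1,\ldots,a_n)$, let $\mathfrak{m}=(x_1-a_1,\ldots,x_n-a_n)\subseteq R$ be the associated maximal ideal, and let $\mathfrak{P}=\mathfrak{m}/I$. Since $(R/I)_{\mathfrak{P}}$ is an Artinian local $k$-algebra with residue field $k$, the condition $\dim_k (R/I)_{\mathfrak{P}}=1$ is equivalent to $(R/I)_{\mathfrak{P}}=k$, equivalently to $I\cdot R_{\mathfrak{m}}=\mathfrak{m}\cdot R_{\mathfrak{m}}$.

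Next I would apply Nakayama's lemma to the finitely generated $R_{\mathfrak{m}}$-module $\mathfrak{m}R_{\mathfrak{m}}$: the equality $I R_{\mathfrak{m}}=\mathfrak{m}R_{\mathfrak{m}}$ holds if and only if the images of $f_1,\ldots,f_m$ in $\mathfrak{m}/\mathfrak{m}^2$ span this $n$-dimensional $k$-vector space. The space $\mathfrak{m}/\mathfrak{m}^2$ has basis $\{x_i-a_i\}_{i=1}^n$, and from the Taylor expansion of $f_j$ at $P$ one sees that the image of $f_j$ in $\mathfrak{m}/\mathfrak{m}^2$ equals
\[
\sum_{i=1}^n \frac{\partial f_j}{\partial x_i}(P)\,(x_i-a_i).
\]
Hence the spanning condition is exactly that the vectors $\bigl((\partial f_j/\partial x_i)(P)\bigr)_{i=1}^n$, for $j=1,\ldots,m$, generate $k^n$, i.e.\ that the Jacobian matrix $(\partial f_j/\partial x_i)$ evaluated at $P$ has rank $n$.

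Combining these equivalences over all points $P\in V(I)$ (equivalently, all $\mathfrak{P}\in\mathrm{Spec}(R/I)$) gives the lemma. I do not expect any serious obstacle here; the only subtle point is to verify cleanly that the image of $f_j$ in $\mathfrak{m}/\mathfrak{m}^2$ is the first-order part of its Taylor expansion at $P$, which follows from writing $f_j=f_j(P)+\sum_i(\partial f_j/\partial x_i)(P)(x_i-a_i)+(\text{terms in }\mathfrak{m}^2)$ and noting $f_j(P)=0$ because $f_j\in I\subseteq\mathfrak{m}$.
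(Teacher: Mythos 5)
Your argument is correct and is the standard proof of this lemma: identify the points of $\mathrm{Spec}(R/I)$ with $V(I)$ via the Nullstellensatz, observe that $\dim_k(R/I)_{\mathfrak P}=1$ is equivalent to $IR_{\mathfrak m}=\mathfrak m R_{\mathfrak m}$, and use Nakayama together with the Taylor expansion to translate this into the rank condition on the Jacobian. The paper itself gives no proof, deferring to \cite[Lemma 5.2]{HY}, and your argument is essentially the one intended there; the only point worth stating explicitly is that the linear term of the Taylor expansion is well defined in positive characteristic (no factorials are needed in degree one), which you have in effect already handled by writing $f_j$ directly in powers of $x_i-a_i$.
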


\subsection{Proof of the multiplicity-one theorem}
In this subsection, we complete the proof of the the multiplicity-one theorem.
We first state a lemma that will be used repeatedly.
\begin{Lem}\label{lem:linear_elimination}
Let $V$ be a vector space over $K$ and let $\alpha_1,\alpha_2,\ldots,\alpha_r$ be distinct elements in $K$.
Let $m$ be a nonnegative integer with $m<r$ and $v_1,v_2,\ldots,v_r$ be elements of $V$ that satisfy
\begin{equation}\label{eq:many_relations}
\begin{aligned}
v_1 + v_2 \cdots + v_{r} &= 0,\\
\alpha_1 v_1 + \alpha_2 v_2 \cdots + \alpha_r v_{r} &= 0,\\
\alpha^2_1 v_1 + \alpha^2_2 v_2 \cdots + \alpha^2_r v_{r} &= 0,\\
\vdots\\
\alpha^m_1 v_1 + \alpha^m_2 v_2 \cdots + \alpha^m_r v_{r} &= 0.
\end{aligned}
\end{equation}
Then, the subspace spanned by $v_1,\ldots,v_r$ can be spanned by any $r-m-1$ vectors chosen from $v_1,\ldots,v_r$.
Specifically, if $k\lbrack1\rbrack,k\lbrack2\rbrack,\ldots,k\lbrack r-m-1\rbrack$ are distinct $r-m-1$ elements of $\{1,2,\ldots,r\}$, then for any $i \in \{1,\ldots,r\} \setminus \{k\lbrack1\rbrack,\ldots,k\lbrack r-m-1\rbrack\}$, $v_i$ lies in the span of $v_{k\lbrack1\rbrack},v_{k\lbrack2\rbrack},\ldots,v_{k\lbrack r-m-1\rbrack}$.
In other words, $\langle v_1,v_2,\ldots,v_r\rangle = \langle v_{k\lbrack1\rbrack},v_{k\lbrack2\rbrack},\ldots,v_{k\lbrack r-m-1\rbrack} \rangle$.
\end{Lem}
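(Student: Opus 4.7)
The plan is to exhibit each $v_i$ with $i\notin\{k[1],\ldots,k[r-m-1]\}$ as a $K$-linear combination of $v_{k[1]},\ldots,v_{k[r-m-1]}$ by rewriting the system \eqref{eq:many_relations} as a Vandermonde linear system and inverting the relevant square block.

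Set $J=\{k[1],\ldots,k[r-m-1]\}$ and $I=\{1,\ldots,r\}\setminus J$, so that $|I|=m+1$. For each $s=0,1,\ldots,m$, I would split the $s$-th equation of \eqref{eq:many_relations} according to whether the index lies in $I$ or in $J$, obtaining
$$\sum_{j\in I}\alpha_j^{\,s}\, v_j \;=\; -\sum_{k\in J}\alpha_k^{\,s}\, v_k.$$
Viewed as $m+1$ linear equations in the $m+1$ ``unknown'' vectors $(v_j)_{j\in I}$, the coefficient matrix on the left is the Vandermonde matrix $(\alpha_j^{\,s})_{0\le s\le m,\, j\in I}$ of size $(m+1)\times(m+1)$.

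Since the $\alpha_i$ are pairwise distinct by hypothesis, so are the $\alpha_j$ for $j\in I$, and hence this Vandermonde matrix is invertible over $K$. Multiplying both sides by its inverse expresses every $v_j$ with $j\in I$ as a $K$-linear combination of the right-hand sides, hence as a $K$-linear combination of $\{v_k:k\in J\}$. This immediately yields the inclusion $\langle v_1,\ldots,v_r\rangle\subseteq\langle v_{k[1]},\ldots,v_{k[r-m-1]}\rangle$, and the reverse inclusion is tautological, giving the claimed equality of spans.

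The argument is essentially a clean Vandermonde manipulation, and the sole ingredient beyond bookkeeping is the invertibility of a Vandermonde matrix with distinct nodes, so no serious obstacle is anticipated; the only points requiring care are the index partition $\{1,\ldots,r\}=I\sqcup J$ and the condition $m<r$, which guarantees $|J|=r-m-1\ge 0$ so that the statement is nonvacuous.
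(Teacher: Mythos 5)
Your proof is correct, and it takes a genuinely different (and arguably cleaner) route than the paper's. The paper proceeds by induction on $r$: it eliminates $v_r$ from the system by multiplying each equation by a suitable power of $\alpha_r$ and subtracting, which produces the same shape of system for the $r-1$ vectors $(\alpha_i-\alpha_r)v_i$ with one fewer equation, and then invokes the inductive hypothesis together with the invertibility of the factors $\alpha_i-\alpha_r$. That elimination is really just Gaussian elimination on the Vandermonde system performed one node at a time, so your argument --- partitioning the indices into $I$ and $J$ with $\lvert I\rvert=m+1$, recognizing the $(m+1)\times(m+1)$ coefficient block $(\alpha_j^{\,s})_{0\le s\le m,\ j\in I}$ as a Vandermonde matrix with distinct nodes, and inverting it in one step --- reaches the same conclusion without the induction or the bookkeeping of reindexed vectors $(\alpha_i-\alpha_r)v_i$. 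What the direct inversion buys is brevity and a transparent reason the hypothesis $m<r$ and the distinctness of the $\alpha_i$ are exactly what is needed; what the paper's induction buys is that it avoids any appeal to determinants and stays entirely at the level of elementary row operations. Both handle the degenerate case $r-m-1=0$ correctly (in your setup $J=\emptyset$, the right-hand sides vanish, and invertibility forces all $v_i=0$). No gap.
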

\begin{proof}
We prove the lemma by induction on $r$.
When $r=1$, \eqref{eq:many_relations} is nothing other than $v_1=0$, and thus the lemma holds.

Now assume that $r>1$ and that the lemma holds for $r-1$.
When $m=0$, the lemma is trivial.
Thus, we assume $0<m$.
By symmetry, it suffices to prove the lemma for the choice $(k\lbrack1\rbrack,k\lbrack2\rbrack,\ldots,k\lbrack r-m-1\rbrack)=(1,2,\ldots,r-m-1)$.
From the equalities \eqref{eq:many_relations}, we eliminate $v_r$, by multiplying with a suitable power of $\alpha_r$ and subtracting, to obtain
\begin{equation*}
\begin{aligned}
(\alpha_1-\alpha_r)v_1 + \cdots + (\alpha_{r-1}-\alpha_r)v_{r-1} &= 0,\\
\alpha_1(\alpha_1-\alpha_r) v_1 + \cdots + \alpha_{r-1}(\alpha_{r-1}-\alpha_r) v_{r-1} &= 0,\\
\vdots\\
\alpha^{m-1}_1(\alpha_1-\alpha_r) v_1 + \cdots + \alpha^{m-1}_{r-1}(\alpha_{r-1}-\alpha_r) v_{r-1} &= 0.
\end{aligned}
\end{equation*}
Our inductive hypothesis for $(\alpha_1-\alpha_r)v_1,\ldots,(\alpha_{r-1}-\alpha_r)v_{r-1}$ implies that for each $i \in \{r-m,\ldots,r-1\}$, $(\alpha_i-\alpha_r)v_i$ lies in the span of $(\alpha_1-\alpha_r)v_1,\ldots,(\alpha_1-\alpha_{r-m-1})v_{r-m-1}$. Since $\alpha_1,\alpha_2,\ldots,\alpha_r$ are distinct, $\alpha_i-\alpha_r$ is a unit in $K$.
Hence $v_i$ lies in the span of $v_1,\ldots,v_{r-m-1}$.
Eliminating $v_{r-1}$ instead of $v_r$ from \eqref{eq:many_relations}, we deduce by the same argument that $v_r$ also lies in the span of $v_1,\ldots,v_{r-m-1}$, completing the proof.
\end{proof}

Let $g$ be a positive integer and $K$ a field of characteristic $p>2$ with $g \le (p-1)/2$ (Remark~\ref{rem:restriction_on_primes}).
Let $C_g(\lambda)$ be the curve $y^2 = f(x)\coloneqq x(x-1)(x-\lambda_1)(x-\lambda_2)\cdots(x-\lambda_{2g-1})$ for $\lambda_1,\ldots,\lambda_{2g-1} \in K$.

We prove Theorem~\ref{thm:SspImpliesNonsing} and Theorem~\ref{thm:MultiplicityOne} by induction on genus $g$.
We have these results for $g=1$ by Igusa~\cite{Igusa} and $g=2$ by Harashita--Yamamoto~\cite{HY}.
We assume that $g>1$ and that Theorem~\ref{thm:SspImpliesNonsing} and Theorem~\ref{thm:MultiplicityOne} hold for genus less than $g$.
We first prove Theorem~\ref{thm:SspImpliesNonsing} for genus $g$, and then prove Theorem~\ref{thm:MultiplicityOne} for genus $g$ by using Theorem~\ref{thm:SspImpliesNonsing}.
By Lemma~\ref{lem:JacobianCriterion}, to prove Theorem~\ref{thm:MultiplicityOne} it suffices to show that the Jacobian matrix is full rank at any superspecial point, which we prove at the end of the section.

We introduce the notation used in this subsection.
\begin{Notation}\label{notation:JacobianMatrix}
When $\lambda\coloneqq(\lambda_1,\ldots,\lambda_{2g-1})$ is a point of $V(c_{ip-j}\,|\, 1\le i,j\le g)$, we denote the Jacobian matrix of the Cartier--Manin matrix at $\lambda$ by $J(\lambda)$.
Arranging rows and columns, we let $J(\lambda)$ be
\begin{equation*}
\begin{pNiceMatrix}
   \partial_1 c_{p-1}(\lambda) & \dots & \partial_1 c_{gp-1}(\lambda) & \dots &  \partial_1 c_{p-g}(\lambda) & \dots & \partial_1 c_{gp-g}(\lambda)\\
  \partial_2 c_{p-1}(\lambda) & \dots & \partial_2 c_{gp-1}(\lambda) & \dots &  \partial_2 c_{p-g}(\lambda) & \dots & \partial_2 c_{gp-g}(\lambda)\\
  \vdots & \ddots & \vdots & \cdots & \vdots & \ddots & \vdots\\
  \partial_{2g-1} c_{p-1}(\lambda) & \dots & \partial_{2g-1} c_{gp-1}(\lambda) & \dots &  \partial_{2g-1} c_{p-g}(\lambda) & \dots & \partial_{2g-1} c_{gp-g}(\lambda)\\
\CodeAfter
\UnderBrace[yshift=1.5mm,shorten]{4-1}{4-3}{j=1}%
\UnderBrace[yshift=1.5mm,shorten]{4-5}{4-last}{j=g}%
\end{pNiceMatrix}.
\end{equation*}

\vspace{1em}
\noindent We write its column vectors as $J(\lambda)=
\begin{pmatrix}
v_{1,1} & \dots & v_{g,1} & \dots & v_{1,g} & \dots & v_{g,g}
\end{pmatrix}$
and the $k$-th row vector of $J(\lambda)$ as $w_k$ for $k=1,\ldots,2g-1$, which is a $1 \times g^2$ matrix.
We denote the $j$-th part of $w_k$ by $w_k^{(j)}$.
Namely,
\[
w_k^{(j)} = 
\begin{pmatrix}
\partial_k c_{p-j}(\lambda) & \partial_k c_{2p-j}(\lambda) & \dots & \partial_k c_{gp-j}(\lambda)
\end{pmatrix}
\]
for $j=1,\ldots,g$, and then $w_k = 
\begin{pmatrix}
w_k^{(1)} & w_k^{(2)} & \dots & w_k^{(g)}
\end{pmatrix}$
for $k=1,\ldots,2g-1$.
\end{Notation}

Let $\lambda=(\lambda_1,\ldots,\lambda_{2g-1})$ be a point of $V(c_{ip-j}\,|\, 1\le i,j\le g)$.
In our induction setting, the following proposition corresponds to Theorem~\ref{thm:SspImpliesNonsing}.

\begin{Prop}\label{prop:SspImpliesNonsing_under_induction}
Assume that $C_g(\lambda)$ is superspecial; more precisely its Cartier--Manin matrix
\[
\begin{pmatrix} 
  c_{p-1} & c_{p-2} & \dots  & c_{p-g} \\
  c_{2p-1} & c_{2p-2} & \dots  & c_{2p-g} \\
  \vdots & \vdots & \ddots & \vdots \\
  c_{gp-1} & c_{gp-2} & \dots  & c_{gp-g}
\end{pmatrix}
\]
is zero when evaluated at $\lambda=(\lambda_1,\ldots,\lambda_{2g-1})$ and that Theorem~\ref{thm:MultiplicityOne} for genus $g-1$ holds.
Then we have $\lambda_k \ne 0, 1$ and $\lambda_k \ne \lambda_\ell$ if $k\ne \ell$ for $k,\ell\in\{1,\ldots,2g-1\}$.
\end{Prop}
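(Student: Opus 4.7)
The plan is to argue by contradiction. I would suppose that $0, 1, \lambda_1, \ldots, \lambda_{2g-1}$ are not all distinct, and by the symmetry of $c_{ip-j}$ in $\lambda_1, \ldots, \lambda_{2g-1}$ together with the involution $x \mapsto 1-x$ exchanging the roots $0$ and $1$ (which preserves the vanishing of the Cartier--Manin matrix), reduce to two essentially distinct cases: (a) $\lambda_1 = 0$, and (b) $\lambda_1 = \lambda_2$. In each case, the first task is to extract the linear-algebraic structure of the Jacobian at the superspecial point.

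The structural input I would use at $\lambda$ is threefold. First, Theorem~\ref{thm:Contiguity_Relations}({\romannumeral 1}) collapses to $\lambda_k \partial_k c_{ip-j}(\lambda) = \partial_k c_{ip-(j+1)}(\lambda)$, which iterates to the key identity $w_k^{(j)} = \lambda_k^{j-1} w_k^{(1)}$. Second, Theorem~\ref{thm:Contiguity_Relations}({\romannumeral 2}) and Corollary~\ref{cor:Relations} yield the family of relations $\sum_k \lambda_k^s(\lambda_k - 1) w_k^{(1)} = 0$ for $s = 1, \ldots, g-1$. Third, in case (b), the PDE $\diffsimp{1}{2}\, c_{ip-j} = 0$ from Theorem~\ref{thm:PDEinPositiveChar}, evaluated at $\lambda_1 = \lambda_2$, forces $w_1^{(j)} = w_2^{(j)}$ for all $j$. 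Next I would exhibit a genus-$(g-1)$ superspecial situation via an explicit factorization of $f(x)^{(p-1)/2}$: in case (a), $f(x)|_{\lambda_1 = 0} = x^2 h(x)$ with $h(x) = (x-1)\prod_{k=2}^{2g-1}(x-\lambda_k)$ gives $c_{ip-j}(\lambda)|_{\lambda_1 = 0} = [x^{(i-1)p + 1 - j}]\, h^{(p-1)/2}$; in case (b), $f(x)|_{\lambda_1 = \lambda_2} = (x-\lambda_1)^2 \tilde h(x)$ with $\tilde h(x) = x(x-1)\prod_{k=3}^{2g-1}(x-\lambda_k)$, combined with the characteristic-$p$ identity $(x-\lambda_1)^{p-1} \equiv \sum_{k=0}^{p-1} \lambda_1^{p-1-k} x^k \pmod{p}$, presents each $c_{ip-j}(\lambda)|_{\lambda_1 = \lambda_2}$ as an $\mathbb{F}_p$-linear combination in $\lambda_1$ of the coefficients of $\tilde h^{(p-1)/2}$. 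After the Rosenhain reparametrization of the reduced curve, the $(g-1)\times(g-1)$ Cartier--Manin matrix of $u^2 = \tilde h(x)$ (respectively $u^2 = h(x)$) embeds as a subsystem of the genus-$g$ system, together with additional vanishings of coefficients at indices outside the C--M matrix.

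The closing step is to apply Theorem~\ref{thm:MultiplicityOne} for genus $g-1$: the reduced superspecial locus is finite and reduced, and via Theorem~\ref{thm:SspImpliesNonsing} for $g-1$ its parameters are distinct; in particular the reduced Jacobian has full rank $2g - 3$ at each candidate reduced point. The extra vanishing conditions, when translated via the genus-$(g-1)$ analogues of Theorem~\ref{thm:Contiguity_Relations} and Corollary~\ref{cor:Relations} into relations on partial derivatives of the reduced Cartier--Manin entries and then combined with Lemma~\ref{lem:linear_elimination}, over-determine the reduced system and produce the desired contradiction in both cases. The main obstacle I expect lies in case (b): disentangling the $\lambda_1$-dependence in the linear system arising from the expansion of $(x-\lambda_1)^{p-1}$ requires a Vandermonde-type argument whose non-degeneracy relies on the distinctness of $\lambda_1$ from $0, 1, \lambda_3, \ldots, \lambda_{2g-1}$---precisely the information supplied by the induction hypothesis applied to the reduced curve.
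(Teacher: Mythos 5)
Your overall strategy (degenerate the curve, invoke the genus-$(g-1)$ induction hypothesis, and contradict the full rank of the reduced Jacobian) is the same as the paper's, but your case split creates a gap that the paper avoids entirely. The paper observes that, since the labeling of the branch points $\{\infty,0,1,\lambda_1,\ldots,\lambda_{2g-1}\}$ is arbitrary up to projective automorphism, \emph{any} coincidence --- including your case (b), $\lambda_1=\lambda_2$ with $\lambda_1\neq 0,1$ --- can be moved by a M\"obius transformation to the single normal form $\lambda_{2g-1}=0$, with the vanishing of the Cartier--Manin matrix preserved (\cite[Proposition 2.2]{Yui}). You only exploit the involution $x\mapsto 1-x$, so you are left with case (b), where expanding $(x-\lambda_1)^{p-1}\equiv\sum_{k=0}^{p-1}\lambda_1^{p-1-k}x^k$ expresses each $c_{ip-j}(\lambda)$ as a $\lambda_1$-weighted sum over coefficients of $\tilde h^{(p-1)/2}$ most of which are \emph{not} entries of the reduced Cartier--Manin matrix; the ``Vandermonde-type argument'' you defer to is precisely the hard part, you give no reason the resulting system closes up, and this entire case is unnecessary.

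Two further points where the sketch falls short of what makes the argument work. First, after setting $\lambda_{2g-1}=0$ the paper does not stop at $f=x^2h(x)$ with $\deg h=2g$: it notes that $c_{p-1}=\pm(z_1\cdots z_{2g-2})^{(p-1)/2}$ must vanish at $\lambda$, forcing a \emph{second} parameter to be $0$, so that $f=x^3g(x)$ with $g(x)=x(x-1)\prod_{k=1}^{2g-3}(x-z_k)$ already in the odd-degree Rosenhain form to which the induction hypothesis, Corollary~\ref{cor:Relations} and Lemma~\ref{lem:rank_of_submatrices} apply; your $h$ of even degree $2g$ without the root $0$ would need a renormalization you only allude to. Second, the contradiction is not merely that the reduced system is ``over-determined'': one needs the specific extra relation $(\lambda_1-1)\widetilde{w}_1^{(1)}+\cdots+(\lambda_{2g-3}-1)\widetilde{w}_{2g-3}^{(1)}=0$, obtained from Theorem~\ref{thm:Contiguity_Relations}({\romannumeral 2}) after showing $w_{2g-2}^{(2)}=w_{2g-1}^{(2)}=0$ via part ({\romannumeral 1}). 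Together with the $g-2$ relations from Corollary~\ref{cor:Relations} this gives $g-1$ relations among the $2g-3$ vectors $\widetilde{w}_k^{(1)}$, so Lemma~\ref{lem:linear_elimination} shows their span is generated by $g-2$ of them, contradicting Lemma~\ref{lem:rank_of_submatrices} for genus $g-1$, which says any $g-1$ of them are independent. Your proposal does not identify where this extra relation comes from, and without it the count does not close.
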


When $g=1$, it is known, as we mentioned in Section 1, that $c_{p-1} = (-1)^{(p-1)/2} H_p(z_1)$, where $\displaystyle H_p(z_1)\coloneqq \sum_{i=0}^{(p-1)/2}{\binom{(p-1)/2}{i}}^2z_1^i.$
Since $H_p(0) = 1$ and $\displaystyle H_p(1) = \sum_{i=0}^{(p-1)/2}{\binom{(p-1)/2}{i}}^2 = \binom{p-1}{(p-1)/2} \equiv (-1)^\frac{p-1}{2} \pmod{p}$ are both nonzero, it follows that $\lambda_1 \ne 0,1$, and thus the proposition holds.

\begin{proof}[Proof of Proposition \ref{prop:SspImpliesNonsing_under_induction}]
Set $m\coloneqq(p-1)/2$.
As stated in the beginning of this subsection, we assume $g>1$ and that the proposition holds for genus less that $g$.
As in Definition~\ref{def:CMMatrix}, let $f(x)\coloneqq x(x-1)(x-z_1)(x-z_2)\cdots(x-z_{2g-1})$, and we denote the $x^k$-coefficient of $f(x)^m$ by $c_k$.

Because the choice of labeling among the branch points $0,1,\lambda_1,\ldots,\lambda_{2g-1}$ is arbitrary up to projective automorphism, any potential coincidence among the branch points can be moved, by an appropriate M{\"o}bius transformation, to the form $\lambda_{2g-1}=0$.
Under this transformation, the condition that the Cartier--Manin matrix vanishes is preserved (\cite[Proposition 2.2]{Yui}).
Therefore, we may assume that $\lambda_{2g-1}=0$ and show that this assumption leads to a contradiction.
Since $\lambda_{2g-1}=0$, we consider $f(x)=x^2(x-1)(x-z_1)(x-z_2)\cdots(x-z_{2g-2})$.
From $f(x)^m=x^{p-1}\left((x-1)(x-z_1)(x-z_2)\cdots(x-z_{2g-2})\right)^m$, we see that $c_{p-2},c_{p-3},\ldots,c_{p-g}$ are zero (as polynomials in $\bm{z}$).
Moreover, since $\lambda$ is a solution to $c_{p-1} = (-1)^{(2g-1)m}\left(z_1 z_2\cdots z_{2g-2}\right)^m$, we have $\lambda_1 \lambda_2 \cdots \lambda_{2g-2} = 0$, and thus we may assume $\lambda_{2g-2} = 0$ without loss of generality.
Hence, we consider $f(x)=x^3(x-1)(x-z_1)\cdots(x-z_{2g-3})$, in which case the Cartier--Manin matrix is
\[
\begin{pmatrix} 
  0 & 0 & \dots  & 0 \\
  c_{2p-1} & c_{2p-2} & \dots  & c_{2p-g} \\
  \vdots & \vdots & \ddots & \vdots \\
  c_{gp-1} & c_{gp-2} & \dots  & c_{gp-g}
\end{pmatrix}.
\]

Let $g(x)\coloneqq x(x-1)(x-z_1)\cdots(x-z_{2g-3})$, and $\delta_k$ be the $x^k$-coefficient of $g(x)^m$.
Then, the Cartier--Manin matrix of $y^2 = g(x)$ is $\widetilde{M}\coloneqq{ \Bigl( \delta_{ip-j} \Bigr) }_{1\le i,j\le g-1}$.
From $f(x)^m=x^{p-1}g(x)^m$, we see that $c_{ip-j}=\delta_{(i-1)p-(j-1)}$ for $i,j\in\{2,\ldots,g\}$.
By our inductive hypothesis, we have $\lambda_k \ne 0, 1$ and $\lambda_k \ne \lambda_\ell$ for $k,\ell\in\{1,\ldots,2g-3\}$ with $k\ne \ell$.
Now we turn to $\widetilde{M}$ and its Jacobian matrix at $\widetilde{\lambda}\coloneqq(\lambda_1,\ldots,\lambda_{2g-3})$ is
\begin{equation}\label{eq:Jacobian_matrix_minus_one}
\begin{pmatrix}
   \partial_1 \delta_{p-1}(\widetilde{\lambda}) & \dots & \partial_1 \delta_{(g-1)p-1}(\widetilde{\lambda}) & \dots & \partial_1 \delta_{(g-1)p-(g-1)}(\widetilde{\lambda})\\
  \partial_2 \delta_{p-1}(\widetilde{\lambda}) & \dots &  \partial_2 \delta_{(g-1)p-1}(\widetilde{\lambda}) & \dots & \partial_2 \delta_{(g-1)p-(g-1)}(\widetilde{\lambda})\\
  \vdots & \ddots & \vdots & \ddots & \vdots\\
  \partial_{2g-3} \delta_{p-1}(\widetilde{\lambda}) & \dots & \partial_{2g-3} \delta_{(g-1)p-1}(\widetilde{\lambda}) & \dots & \partial_{2g-3} \delta_{(g-1)p-(g-1)}(\widetilde{\lambda})
\end{pmatrix},
\end{equation}
which equals a submatrix of $J(\lambda)$:
\[
\begin{pmatrix}
   \partial_1 c_{2p-2}(\lambda) & \dots & \partial_1 c_{gp-2}(\lambda) & \dots & \partial_1 c_{gp-g}(\lambda)\\
  \partial_2 c_{2p-2}(\lambda) & \dots &  \partial_2 c_{gp-2}(\lambda) & \dots & \partial_2 c_{gp-g}(\lambda)\\
  \vdots & \ddots & \vdots & \ddots & \vdots\\
  \partial_{2g-3} c_{2p-2}(\lambda) & \dots & \partial_{2g-3} c_{gp-2}(\lambda) & \dots & \partial_{2g-3} c_{gp-g}(\lambda)
\end{pmatrix}.
\]
We denote the $k$-th row vector of \eqref{eq:Jacobian_matrix_minus_one} by $\widetilde{w}_k$ and the $j$-th part of $\widetilde{w}_k$ by $\widetilde{w}_k^{(j)}$ for $k=1,\ldots,2g-3$ (as in Notation~\ref{notation:JacobianMatrix}):
\[
\widetilde{w}_k^{(j)} =
\begin{pmatrix}
\partial_k \delta_{p-j}(\widetilde{\lambda}) & \partial_k \delta_{2p-j}(\widetilde{\lambda}) & \dots & \partial_k \delta_{(g-1)p-j}(\widetilde{\lambda})
\end{pmatrix}.
\]
Note that $w_k^{(j+1)} = \left( 0, \widetilde{w}_k^{(j)} \right)$ for $k \in \{1,\ldots,2g-3\}$ and $j \in \{1,\ldots,g-1\}$.

By Corollary~\ref{cor:Relations} for $\widetilde{M}$, we obtain
\begin{equation}\label{eq:relations_of_minus_one}
\begin{aligned}
\lambda_1(\lambda_1-1) \widetilde{w}_1^{(1)} + \cdots + \lambda_{2g-3}(\lambda_{2g-3}-1) \widetilde{w}_{2g-3}^{(1)} &= 0,\\
\lambda_1^2(\lambda_1-1) \widetilde{w}_1^{(1)} + \cdots + \lambda_{2g-3}^2(\lambda_{2g-3}-1) \widetilde{w}_{2g-3}^{(1)} &= 0,\\
\vdots\\
\lambda_1^{g-2}(\lambda_1-1) \widetilde{w}_1^{(1)} + \cdots + \lambda_{2g-3}^{g-2}(\lambda_{2g-3}-1) \widetilde{w}_{2g-3}^{(1)} &= 0.
\end{aligned}
\end{equation}
Moreover, by Theorem~\ref{thm:Contiguity_Relations} ({\romannumeral 1}) for $j=1,k=2g-2$, we have $\partial_{2g-2}c_{ip-2}(\lambda)=\lambda_{2g-2}\partial_{2g-2}c_{ip-1}(\lambda)-ec_{ip-1}(\lambda)=0$ for any $i$.
Similarly, $\partial_{2g-1}c_{ip-2}(\lambda)=0$ for any $i$.
Hence $w_{2g-2}^{(2)}=w_{2g-1}^{(2)}=0$.
Since $(\lambda_1-1)w_1^{(2)}+\cdots+(\lambda_{2g-1}-1)w_{2g-1}^{(2)}=0$ by Theorem~\ref{thm:Contiguity_Relations} ({\romannumeral 2}), we have
\begin{equation}\label{eq:add_relation_of_minus_one}
(\lambda_1-1) \widetilde{w}_1^{(1)} + \cdots + (\lambda_{2g-3}-1) \widetilde{w}_{2g-3}^{(1)} = 0.
\end{equation}
From the equalities \eqref{eq:add_relation_of_minus_one} and \eqref{eq:relations_of_minus_one}, and the hypothesis that $0,1,\lambda_1,\ldots,\lambda_{2g-3}$ are distinct, Lemma~\ref{lem:linear_elimination} applied with $v_1=(\lambda_1-1) \widetilde{w}_1^{(1)},\ldots,v_r=(\lambda_{2g-3}-1) \widetilde{w}_{2g-3}^{(1)}$ and with $m=g-2,r=2g-3$, implies that $(\lambda_{g-1}-1) \widetilde{w}_{g-1}^{(1)}$ is a linear combination of $(\lambda_1-1) \widetilde{w}_1^{(1)},\ldots,(\lambda_{g-2}-1) \widetilde{w}_{g-2}^{(1)}$.
Therefore, $\widetilde{w}_{g-1}^{(1)}$ is a linear combination of $\widetilde{w}_1^{(1)},\ldots,\widetilde{w}_{g-2}^{(1)}$.
On the other hand, Lemma~\ref{lem:rank_of_submatrices} applied to the case $y^2=g(x)$ of genus $g-1$ implies that $\widetilde{w}_1^{(1)},\ldots,\widetilde{w}_{g-2}^{(1)},\widetilde{w}_{g-1}^{(1)}$ are linearly independent, which leads to a contradiction, completing the proof.
\end{proof}

\begin{Lem}\label{lem:linear_independence}
$v_{1,j},v_{2,j},\ldots,v_{g,j}$ are linearly independent for $j = 1,\ldots,g$.
\end{Lem}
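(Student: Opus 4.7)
The plan is to fix $j \in \{1, \ldots, g\}$, assume a relation $\sum_{i=1}^g a_i v_{i,j} = 0$ holds, set $Q(z) \coloneqq \sum_{i=1}^g a_i c_{ip-j}(z)$, and argue that $Q \equiv 0$, which will then force $a_i = 0$ via a disjoint-support argument on the Lauricella truncations. The hypothesis immediately gives $\nabla Q(\lambda) = 0$, and superspeciality at $\lambda$ gives $Q(\lambda) = 0$. Moreover, by Theorem~\ref{thm:PDEinPositiveChar}, $Q$ is annihilated by every $\diffchar{\ell}{j}$ and every $\diffsimp{\ell}{m}$.

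The main step is to prove by induction on $r$ that $\partial^{\bm{\alpha}} Q(\lambda) = 0$ for every multi-index $\bm{\alpha}$ with $|\bm{\alpha}| = r$. The cases $r \le 1$ are in hand. For the step from $r$ to $r+1$, I split into two subcases. If $\bm{\alpha}$ has at least two distinct nonzero entries, write $\bm{\alpha} = \alpha' + \bm{e}_\ell + \bm{e}_m$ with $\ell \ne m$ and $|\alpha'| = r - 1$; applying $\partial^{\alpha'}$ to $\diffsimp{\ell}{m} Q = 0$ and evaluating at $\lambda$, every term of order $\le r$ vanishes by induction and one is left with $(\lambda_\ell - \lambda_m) \partial^{\bm{\alpha}} Q(\lambda) = 0$, whence $\partial^{\bm{\alpha}} Q(\lambda) = 0$ since $\lambda_\ell \ne \lambda_m$ by Proposition~\ref{prop:SspImpliesNonsing_under_induction}. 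Otherwise $\bm{\alpha} = (r+1)\bm{e}_\ell$; applying $\partial_\ell^{r-1}$ to $\diffchar{\ell}{j} Q = 0$ and evaluating at $\lambda$, a careful Leibniz expansion shows that the mixed-derivative contributions reduce to order-$(r+1)$ derivatives of the first subcase's shape together with strictly lower-order derivatives, all now known to vanish; only $\lambda_\ell(1-\lambda_\ell)\partial_\ell^{r+1} Q(\lambda) = 0$ survives, and $\lambda_\ell \ne 0, 1$ (again Proposition~\ref{prop:SspImpliesNonsing_under_induction}) yields $\partial_\ell^{r+1} Q(\lambda) = 0$.

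To finish, note that each $c_{ip-j}$ is a coefficient of $\bigl(x(x-1)\prod_l(x-z_l)\bigr)^m$ in $x$ and so has degree at most $m = (p-1)/2$ in every variable $z_k$; hence so does $Q$. In particular, the Taylor expansion of $Q$ at $\lambda$ only involves monomials $(z-\lambda)^{\bm{\alpha}}$ with $\alpha_k \le m < p$, so every $\bm{\alpha}!$ is a unit in $\mathbb{F}_p$, and the vanishing of all $\partial^{\bm{\alpha}} Q(\lambda)$ forces $Q \equiv 0$. By Fact~\ref{fact:transition} combined with Lemma~\ref{lem:simplification}, each $c_{ip-j}$ is a nonzero scalar multiple of $\truncatedAL[i][j]$, so $Q$ is a linear combination $\sum_i \gamma_i \truncatedAL[i][j]$ in which each $\gamma_i$ is a nonzero scalar multiple of $a_i$. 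Since $d'_i - d'_{i+1} = p$ while each support $\supp$ has total-degree range of width $m < p$, the sets $\supp[1][j], \ldots, \supp[g][j]$ lie in pairwise disjoint ranges of total degree; together with $\truncatedAL \ne 0$ from Lemma~\ref{lem:support}, $Q \equiv 0$ forces every $a_i = 0$.

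The main obstacle I expect is the bookkeeping in the pure-power subcase of the induction: the Leibniz expansion of $\partial_\ell^{r-1}\bigl[z_\ell(1-z_\ell)\partial_\ell^2 Q\bigr]$, together with analogous expansions of the cross terms $\sum_{k \ne \ell} z_k(1-z_\ell)\partial_\ell\partial_k Q$, produces $O(r)$ auxiliary terms of orders $r$ and $r-1$ (with $r$-dependent coefficients) that must each be identified and killed by the inductive hypothesis before the clean relation $\lambda_\ell(1-\lambda_\ell)\partial_\ell^{r+1} Q(\lambda) = 0$ emerges.
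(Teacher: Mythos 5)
Your proposal is correct and follows essentially the same route as the paper: both form the combination $Q=\sum_i a_i c_{ip-j}$, use the vanishing of $Q$ and $\nabla Q$ at $\lambda$ together with the partial differential equations of Theorem~\ref{thm:PDEinPositiveChar} to propagate the vanishing of all Taylor coefficients at $\lambda$ (which requires $0,1,\lambda_1,\ldots,\lambda_{2g-1}$ distinct, as in Proposition~\ref{prop:SspImpliesNonsing_under_induction}), and then conclude $a_i=0$ from the disjointness of the supports $\supp[1][j],\ldots,\supp[g][j]$. The only difference is that you spell out the two-case induction ($\diffsimp{\ell}{m}$ for mixed derivatives, $\diffchar{\ell}{j}$ for pure powers) that the paper compresses into the phrase ``an inductive argument shows.''
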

\begin{proof}
We first claim that $c_{p-j},c_{2p-j},\ldots,c_{gp-j}$ are linearly independent over $K$.
For any $i \in \{1,\ldots,g-1\}$, the difference between the lowest degree among the monomials of $c_{ip-j}$ and the highest degree among those of $c_{(i+1)p-j}$ is $\left((p-1)g-ip+j\right) - \left((p-1)(g+1/2)-(i+1)p+j\right) = (p+1)/2 > 0$.
Hence $\supp[1][j],\ldots,\supp[g][j]$ are disjoint, and the calim follows.

Suppose, toward a contradiction, that $v_{1,j},v_{2,j},\ldots,v_{g,j}$ are linearly dependent.
Then there is a nontrivial linear combination $\alpha_1 v_{1,j} + \alpha_2 v_{2,j} + \cdots + \alpha_g v_{g,j} = 0$ with $\alpha_1,\ldots,\alpha_g$ not all zero.
There exists $\ell \in \{1,\ldots,g\}$ such that $\alpha_\ell \ne 0$.
Write
\begin{equation}\label{eq:linear_independence_contradiction}
v_{\ell,j} = \widetilde{\alpha}_1 v_{1,j} +\cdots + \widetilde{\alpha}_{\ell-1} v_{\ell-1,j} + \widetilde{\alpha}_{\ell+1} v_{\ell+1,j} +\cdots +\widetilde{\alpha}_g v_{g,j},
\end{equation}
where $\widetilde{\alpha}_i \coloneqq -\alpha^{-1}_\ell \alpha_i$ for any $i \in \{1,\ldots,g\} \setminus \{\ell\}$.
We consider, for any $i \in \{1,\ldots,g\}$, the expansion of $c_{ip-j}$
at $\lambda\coloneqq(\lambda_1,\ldots,\lambda_{2g-1})$:
As the $z_\ell$-degree ($\ell = 1,\ldots,2g-1$) of $c_{ip-j}$  is at most $(p-1)/2$, 
we can write $c_{ip-j}$ as
\[
\sum_{k_1,\ldots,k_{2g-1}} \frac{1}{k_1!\cdots k_{2g-1}!}\frac{\partial^{k_1+\dots+k_{2g-1}}c_{ip-j}}{\partial z_1^{k_1}\dots\partial z_{2g-1}^{k_{2g-1}}} (\lambda)
(z_1-\lambda_1)^{k_1} \cdots (z_{2g-1}-\lambda_{2g-1})^{k_{2g-1}}.
\]
From \eqref{eq:linear_independence_contradiction}, the expansion at $\lambda$ of $c_{\ell p-j}$ is a linear combination of those of $c_{ip-j}$ for $i \in \{1,\ldots,g\} \setminus \{\ell\}$ up to degree one.
Since $c_{p-j},\ldots,c_{gp-j}$ satisfy the same partial differential equations in Theorem~\ref{thm:PDEinPositiveChar}, an inductive argument shows that $c_{\ell p-j} = \widetilde{\alpha}_1 c_{p-j} +\cdots + \widetilde{\alpha}_{\ell-1} c_{(\ell-1)p-j} + \widetilde{\alpha}_{\ell+1} c_{(\ell+1)p-j} +\cdots +\widetilde{\alpha}_g c_{gp-j}$, contradicting the first paragraph.
\end{proof}

\begin{Lem}\label{lem:rank_of_submatrices}
Any $g\times g$ submatrix of $\begin{pmatrix} v_{1,1} & v_{2,1} & \dots & v_{g,1} \end{pmatrix}$ is of rank $g$.
\end{Lem}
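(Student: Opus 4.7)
The plan is to pin down the left null space of the $(2g-1) \times g$ matrix $A \coloneqq \begin{pmatrix} v_{1,1} & v_{2,1} & \cdots & v_{g,1}\end{pmatrix}$ explicitly by means of the contiguity relations, observe that it has a Vandermonde-type basis, and then rule out any nontrivial element whose support lies in a set of size $g$ by means of a polynomial-degree count.

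First I would apply Theorem~\ref{thm:SspImpliesNonsing} for genus $g$ (that is, Proposition~\ref{prop:SspImpliesNonsing_under_induction}, already established) to obtain that $0, 1, \lambda_1, \ldots, \lambda_{2g-1}$ are pairwise distinct. Evaluating Theorem~\ref{thm:Contiguity_Relations}({\romannumeral 1}) at the superspecial point $\lambda$ and using $c_{ip-j}(\lambda) = 0$ yields the recursion $\partial_k c_{ip-(j+1)}(\lambda) = \lambda_k \partial_k c_{ip-j}(\lambda)$, whence by iteration
\[
\partial_k c_{ip-j}(\lambda) = \lambda_k^{j-1}\, \partial_k c_{ip-1}(\lambda) \qquad (1 \le j \le g).
\]
Plugging this into Theorem~\ref{thm:Contiguity_Relations}({\romannumeral 2}) at $\lambda$ gives, for each $j \in \{1, \ldots, g-1\}$ and every $i$,
\[
\sum_{k=1}^{2g-1} \lambda_k^j(\lambda_k - 1)\, \partial_k c_{ip-1}(\lambda) = 0,
\]
which, read across $i = 1, \ldots, g$, amounts to the row relations $\sum_{k=1}^{2g-1} \lambda_k^j(\lambda_k - 1)\, w_k^{(1)} = 0$ for $j = 1, \ldots, g-1$ among the rows of $A$.

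By Lemma~\ref{lem:linear_independence}, the rank of $A$ is $g$, so its left null space has dimension exactly $g - 1$. The coefficient vectors $\mathbf{u}_j \coloneqq (\lambda_k^j(\lambda_k - 1))_{k=1}^{2g-1}$ for $j = 1, \ldots, g-1$ are linearly independent in $K^{2g-1}$: any nontrivial dependence would produce a nonzero polynomial of degree at most $g - 1$ vanishing at the $2g - 1$ distinct values $\lambda_1, \ldots, \lambda_{2g-1}$, which is impossible. Hence $\mathbf{u}_1, \ldots, \mathbf{u}_{g-1}$ form a basis of the left null space.

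Finally, a $g \times g$ submatrix of $A$ failing to be of rank $g$ is equivalent to the existence of a nonzero element of the left null space whose entries vanish on some subset $D \subset \{1, \ldots, 2g-1\}$ of size $g - 1$ (the complement of the chosen row indices). Writing such an element as $\sum_{j=1}^{g-1} \gamma_j \mathbf{u}_j$ and setting $P(x) \coloneqq \sum_{j=1}^{g-1} \gamma_j x^j$, the vanishing condition at $d \in D$ reads $P(\lambda_d)(\lambda_d - 1) = 0$; since $\lambda_d \ne 1$, this forces $P(\lambda_d) = 0$ for every $d \in D$. Combined with $P(0) = 0$ (no constant term) and the fact that $0$ together with $\{\lambda_d : d \in D\}$ constitute $g$ pairwise distinct values, the polynomial $P$ of degree $\le g - 1$ has $g$ distinct roots and must vanish identically, forcing all $\gamma_j = 0$. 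The step I expect to require the most care is correctly extracting and exhibiting the basis $\mathbf{u}_1, \ldots, \mathbf{u}_{g-1}$ of the left null space from Theorem~\ref{thm:Contiguity_Relations}; once this is in place, the polynomial-degree count that concludes the argument is elementary.
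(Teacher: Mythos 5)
Your proof is correct, and it rests on the same ingredients as the paper's: the Vandermonde-type row relations $\sum_k \lambda_k^{j}(\lambda_k-1)\,w_k^{(1)}=0$ for $j=1,\dots,g-1$ (which the paper packages as \eqref{eq:ample_relations} via Corollary~\ref{cor:Relations}, and which you rederive directly from Theorem~\ref{thm:Contiguity_Relations}), the full column rank from Lemma~\ref{lem:linear_independence}, and the distinctness of $0,1,\lambda_1,\dots,\lambda_{2g-1}$ from Proposition~\ref{prop:SspImpliesNonsing_under_induction}. Where you diverge is the linear-algebra endgame. The paper feeds the relations into its elimination lemma (Lemma~\ref{lem:linear_elimination}, applied with $v_k=\lambda_k(\lambda_k-1)w_k^{(1)}$, $m=g-2$, $r=2g-1$) to conclude that any $g$ of the rows already span the row space, and then invokes Lemma~\ref{lem:linear_independence} to see that a $g$-element spanning set of a $g$-dimensional space is independent. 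You instead dualize: you use Lemma~\ref{lem:linear_independence} up front to pin the left null space at dimension exactly $g-1$, exhibit the vectors $\mathbf{u}_j=(\lambda_k^{j}(\lambda_k-1))_k$ as a basis of it, and rule out a rank-deficient $g\times g$ submatrix by observing that a null vector vanishing on the complementary $g-1$ indices corresponds to a polynomial $P$ of degree at most $g-1$ with no constant term vanishing at $g-1$ further distinct nonzero points, hence $g$ distinct roots in all. The two arguments are equivalent in content (the paper's Lemma~\ref{lem:linear_elimination} is an iterated-elimination proof of the same Vandermonde nonvanishing), but yours makes the Vandermonde structure explicit and avoids Lemma~\ref{lem:linear_elimination} entirely; the paper's choice to factor that lemma out pays off because it is reused in Proposition~\ref{prop:SspImpliesNonsing_under_induction} and Theorem~\ref{thm:rank_maximum}, whereas your degree-count argument needs the exact null-space dimension from Lemma~\ref{lem:linear_independence} before it can start.
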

\begin{proof}
We choose distinct $g$ elements $k\lbrack1\rbrack,k\lbrack2\rbrack,\ldots,k\lbrack g\rbrack$ from $\{1,2,\ldots,2g-1\}$ arbitrarily, and prove the lemma for the submatrix formed by $w_{k\lbrack1\rbrack}^{(1)},\ldots,w_{k\lbrack g\rbrack}^{(1)}$.
We claim that the row space of $\begin{pmatrix} v_{1,1} & v_{2,1} & \dots & v_{g,1} \end{pmatrix}$, which is spanned by $w_1^{(1)},w_2^{(1)},\ldots,w_{2g-1}^{(1)}$, can be spanned by $w_{k\lbrack1\rbrack}^{(1)},\ldots,w_{k\lbrack g\rbrack}^{(1)}$.
Once we prove the claim, the lemma follows by Lemma~\ref{lem:linear_independence}.
By Corollary~\ref{cor:Relations}, we obtain
\begin{equation}\label{eq:ample_relations}
\begin{aligned}
\lambda_1(\lambda_1-1) w_1^{(1)} + \cdots + \lambda_{2g-1}(\lambda_{2g-1}-1) w_{2g-1}^{(1)} &= 0,\\
\lambda_1^2(\lambda_1-1) w_1^{(1)} + \cdots + \lambda_{2g-1}^2(\lambda_{2g-1}-1) w_{2g-1}^{(1)} &= 0,\\
\vdots\\
\lambda_1^{g-1}(\lambda_1-1) w_1^{(1)} + \cdots + \lambda_{2g-1}^{g-1}(\lambda_{2g-1}-1) w_{2g-1}^{(1)} &= 0.
\end{aligned}
\end{equation}
From these equalities and Proposition~\ref{prop:SspImpliesNonsing_under_induction}, Lemma~\ref{lem:linear_elimination} applied with $v_1=\lambda_1(\lambda_1-1) w_1^{(1)},\ldots,v_r=\lambda_{2g-1}(\lambda_{2g-1}-1) w_{2g-1}^{(1)}$ and with $m=g-2,r=2g-1$, implies that the span of $\lambda_1(\lambda_1-1)w_1^{(1)},\lambda_2(\lambda_2-1)w_2^{(1)},\ldots,\lambda_{2g-1}(\lambda_{2g-1}-1)w_{2g-1}^{(1)}$ equals the span of $\lambda_{k\lbrack1\rbrack}(\lambda_{k\lbrack1\rbrack}-1)w_{k\lbrack1\rbrack}^{(1)},\lambda_{k\lbrack2\rbrack}(\lambda_{k\lbrack2\rbrack}-1)w_{k\lbrack2\rbrack}^{(1)},\ldots,\lambda_{k\lbrack g\rbrack}(\lambda_{k\lbrack g\rbrack}-1)w_{k\lbrack g\rbrack}^{(1)}$.
Since $\lambda_1(\lambda_1-1),\ldots,\lambda_{2g-1}(\lambda_{2g-1}-1)$ are units in $K$, these spans coincide:
\[
\langle w_1^{(1)},w_2^{(1)},\ldots,w_{2g-1}^{(1)} \rangle = \langle w_{k\lbrack1\rbrack}^{(1)},\ldots,w_{k\lbrack g\rbrack}^{(1)} \rangle,
\]
and the claim is proved.
\end{proof}

\begin{Thm}\label{thm:rank_maximum}
The rank of $J(\lambda)$ is $2g-1$.
\end{Thm}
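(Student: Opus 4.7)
The goal is to show that the $2g-1$ rows $w_1,\ldots,w_{2g-1}$ of $J(\lambda)$ are linearly independent. The first move exploits that every $c_{ip-j}(\lambda)=0$: evaluating Theorem~\ref{thm:Contiguity_Relations}~({\romannumeral 1}) at $\lambda$ reduces the identity $z_k\partial_k c_{ip-j} - m c_{ip-j} = \partial_k c_{ip-(j+1)}$ to $\partial_k c_{ip-(j+1)}(\lambda) = \lambda_k \partial_k c_{ip-j}(\lambda)$, and iterating over $j$ yields the clean block identity $w_k^{(j)} = \lambda_k^{j-1} w_k^{(1)}$ for every $k\in\{1,\ldots,2g-1\}$ and every $j\in\{1,\ldots,g\}$.

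Assume a linear relation $\sum_{k=1}^{2g-1}\alpha_k w_k = 0$. Comparing blocks and invoking the identity above, this is equivalent to the $g$ vector equations
\[
\sum_{k=1}^{2g-1}\alpha_k \lambda_k^{j-1} w_k^{(1)} = 0 \qquad (j=1,\ldots,g).
\]
By Lemma~\ref{lem:rank_of_submatrices}, any $g$ of the vectors $w_1^{(1)},\ldots,w_{2g-1}^{(1)}$ are linearly independent, so their span $U$ is $g$-dimensional, and the kernel $R$ of the surjection $K^{2g-1}\twoheadrightarrow U$ defined by $(\beta_k)\mapsto\sum_k \beta_k w_k^{(1)}$ has dimension $g-1$. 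The equations above place the $g$ vectors $v^{(j)}:=(\alpha_k\lambda_k^{j-1})_{k}$, $j=1,\ldots,g$, inside this $(g-1)$-dimensional subspace $R$.

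Hence these $g$ vectors must be linearly dependent: there exist $c_0,\ldots,c_{g-1}$ not all zero with $\sum_{j=1}^{g} c_{j-1}\, v^{(j)} = 0$. Reading off the $k$-th coordinate gives $\alpha_k P(\lambda_k)=0$ for every $k$, where $P(x):=\sum_{j=1}^{g} c_{j-1}x^{j-1}$ is a nonzero polynomial of degree at most $g-1$. Since the $\lambda_k$ are distinct by Proposition~\ref{prop:SspImpliesNonsing_under_induction} and $P$ has at most $g-1$ roots, $\alpha_k=0$ for at least $g$ of the indices. The case $j=1$ of the display then reduces $\sum_k \alpha_k w_k^{(1)}=0$ to a relation among at most $g-1$ of the $w_k^{(1)}$, which are linearly independent as a subfamily of any $g$-subfamily (Lemma~\ref{lem:rank_of_submatrices}), forcing the remaining $\alpha_k$ to vanish as well. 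The step I expect to be the main obstacle is the initial collapse of Theorem~\ref{thm:Contiguity_Relations}~({\romannumeral 1}) into the block identity $w_k^{(j)}=\lambda_k^{j-1} w_k^{(1)}$; once that is in hand, the remainder is a Vandermonde-flavored counting argument combined with Lemma~\ref{lem:rank_of_submatrices}.
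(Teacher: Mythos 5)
Your proof is correct, and it takes a genuinely different route from the paper's. The paper does not argue directly on the rows $w_1,\ldots,w_{2g-1}$: it introduces the auxiliary vectors $\widehat{w}_\ell=\sum_k\lambda_k^\ell(\lambda_k-1)w_k$ for $\ell=1,\ldots,g-1$, which by Corollary~\ref{cor:Relations} have a triangular block shape (zero in the blocks $j=1,\ldots,g-\ell$), and proves that $w_1,\ldots,w_g,\widehat{w}_1,\ldots,\widehat{w}_{g-1}$ form an independent family in the row space; the crux there is showing that the leading block of each $\widehat{w}_\ell$ is nonzero, which is done by contradiction via Lemma~\ref{lem:linear_elimination} applied to the relations of Corollary~\ref{cor:Relations}. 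The step you flag as the likely obstacle is in fact no obstacle: the collapse $w_k^{(j)}=\lambda_k^{j-1}w_k^{(1)}$ is exactly the paper's own deduction from Theorem~\ref{thm:Contiguity_Relations}~(i) once every $c_{ip-j}(\lambda)=0$. From there your argument diverges: the observation that the $g$ coefficient vectors $(\alpha_k\lambda_k^{j-1})_k$ all lie in the $(g-1)$-dimensional kernel of $(\beta_k)\mapsto\sum_k\beta_k w_k^{(1)}$, hence are dependent, hence $\alpha_kP(\lambda_k)=0$ for a nonzero $P$ of degree at most $g-1$, is a clean Vandermonde-style replacement for the paper's triangular-block analysis, and it avoids any direct use of Corollary~\ref{cor:Relations} and Lemma~\ref{lem:linear_elimination} at this stage (both are of course still needed inside Lemma~\ref{lem:rank_of_submatrices} and Proposition~\ref{prop:SspImpliesNonsing_under_induction}, on which you rely). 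What your approach buys is a shorter proof that the actual rows are independent; what the paper's approach buys is an explicit independent spanning set of the row space, which it records in the remark following the theorem.
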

\begin{proof}
We prove that the row rank of $J(\lambda)$ is $2g-1$.
For $\ell \in \{1,\ldots,g-1\}$, define
\begin{equation*}
\widehat{w}_\ell \coloneqq \lambda^\ell_1(\lambda_1 -1)w_1 + \cdots + \lambda^\ell_{2g-1}(\lambda_{2g-1} -1)w_{2g-1},
\end{equation*}
which is in the row space of $J(\lambda)$ (see Notation~\ref{notation:JacobianMatrix}).
We show that $2g-1$ row vectors
\[
w_1,w_2,\ldots,w_g,\widehat{w}_1,\widehat{w}_2,\ldots,\widehat{w}_{g-1}
\]
are linearly independent.
Suppose that we have an arbitrary linear relation
\begin{equation}\label{eq:linear_independence_to_be_shown}
\alpha_1 w_1 +\cdots+ \alpha_g w_g + \beta_1 \widehat{w}_1 +\cdots+ \beta_{g-1} \widehat{w}_{g-1} = 0,
\end{equation}
where $\alpha_1,\ldots,\alpha_g,\beta_1,\ldots,\beta_{g-1} \in K$.
By Corollary~\ref{cor:Relations}, $\widehat{w}_\ell\,(\ell=1,\ldots,g-1)$ looks as follows:
\begin{equation}\label{eq:hatvector}
\widehat{w}_\ell =
\begin{pNiceMatrix}
   \mathbf{0} & \mathbf{0} & \cdots & \mathbf{0} & \bigstar & \ast & \cdots & \ast\\
   j=1        & 2          & \cdots & g-\ell     & g-\ell+1 & g-\ell+2 &   \cdots & g
\end{pNiceMatrix},
\end{equation}
where each symbol $\mathbf{0},\bigstar,\ast$ is comprised of $g$ entries, and the symbol $\bigstar$ at the position $j=g-\ell+1$ is $\lambda^\ell_1(\lambda_1-1)w^{(g-\ell+1)}_{1} +\cdots +\lambda^\ell_{2g-1}(\lambda_{2g-1}-1)w^{(g-\ell+1)}_{2g-1}$.
Since, for any $\ell \in \{1,\ldots,g-1\}$, the first $g$ entries in the position of $j=1$ of $\widehat{w}_\ell$ are zero, from \eqref{eq:linear_independence_to_be_shown} we have $\alpha_1 w^{(1)}_1 +\cdots+ \alpha_g w^{(1)}_g = 0$.
By Lemma~\ref{lem:rank_of_submatrices}, we obtain $\alpha_1 = \cdots = \alpha_g = 0$, and \eqref{eq:linear_independence_to_be_shown} becomes $\beta_1 \widehat{w}_1 +\cdots+ \beta_{g-1} \widehat{w}_{g-1} = 0$.

Here, we claim that for any $\ell \in \{1,\ldots,g-1\}$, $\bigstar$ in \eqref{eq:hatvector} is nonzero.
This will lead to the linear independence of $\widehat{w}_1,\widehat{w}_2,\ldots,\widehat{w}_{g-1}$.
To verify this claim, suppose to the contrary that $\bigstar$ is zero for some $\ell$:
\begin{equation}\label{eq:asumption_toward_contradiction}
\lambda^\ell_1(\lambda_1-1)w^{(g-\ell+1)}_{1} +\cdots +\lambda^\ell_{2g-1}(\lambda_{2g-1}-1)w^{(g-\ell+1)}_{2g-1} = 0.
\end{equation}
By Theorem~\ref{thm:Contiguity_Relations} ({\romannumeral 1}), we obtain the relations $w^{(j)}_k = \lambda_k w^{(j-1)}_k$ for any $k\in\{1,\ldots,2g-1\}$ and for any $j\in\{2,3,\ldots,g\}$.
From these relations, we rewrite \eqref{eq:asumption_toward_contradiction} as
\[
\lambda_1^{g}(\lambda_1-1) w_1^{(1)} + \cdots + \lambda_{2g-1}^{g}(\lambda_{2g-1}-1) w_{2g-1}^{(1)} = 0.
\]
From this equality together with the equalities \eqref{eq:ample_relations}, and Proposition~\ref{prop:SspImpliesNonsing_under_induction}, we apply Lemma~\ref{lem:linear_elimination} with $v_1=\lambda_1(\lambda_1-1) w_1^{(1)},\ldots,v_r=\lambda_{2g-1}(\lambda_{2g-1}-1) w_{2g-1}^{(1)}$ and with $m=g-1,r=2g-1$, to deduce that $w_{g}^{(1)}$ is a linear combination of $w_1^{(1)},w_2^{(1)},\ldots,w_{g-1}^{(1)}$.
This contradicts the linear independence of $w_1^{(1)},w_2^{(1)},\ldots,w_{g}^{(1)}$ (Lemma~\ref{lem:rank_of_submatrices}), thereby establishing our claim.

Thus, $\widehat{w}_1,\widehat{w}_2,\ldots,\widehat{w}_{g-1}$ are linearly independent and hence $\beta_1=\cdots=\beta_{g-1}=0$, and we have completed the proof.
\end{proof}

\begin{Rem}
In the proof above, choosing any distinct $g$ elements $k\lbrack1\rbrack,k\lbrack2\rbrack,\ldots,k\lbrack g\rbrack$ from $\{1,2,\ldots,2g-1\}$, we may replace $w_1,w_2,\ldots,w_g$ with $w_{k\lbrack1\rbrack},w_{k\lbrack2\rbrack},\ldots,w_{k\lbrack g\rbrack}$.
Namely, $w_{k\lbrack1\rbrack},w_{k\lbrack2\rbrack},\ldots,w_{k\lbrack g\rbrack},\widehat{w}_1,\widehat{w}_2,\ldots,\widehat{w}_{g-1}$ are linearly independent.
\end{Rem}

\end{document}